\newcommand{\m}{\mathfrak{m} }
\newcommand{\F}{\mathfrak{F} }
\newcommand{\A}{\mathfrak{A} }
\newcommand{\R}{\mathcal{R} }
\newcommand{\Sc}{\mathcal{S} }
\newcommand{\Z}{\mathbb{Z} }
\newcommand{\rt}{\rightarrow}
\newcommand{\ov}{\overline}
\newcommand{\wt}{\widetilde }
\newcommand{\image}{\operatorname{image}}
\newcommand{\smod}{\operatorname{mod}}
\newcommand{\grade}{\operatorname{grade}}
\newcommand{\Ass}{\operatorname{Ass}}
\newcommand{\ann}{\operatorname{ann}}
\newcommand{\gmod}{\operatorname{ ^*mod}}
\newcommand{\Lmod}{\operatorname{ ^*Mod}}
\newcommand{\coker}{\operatorname{coker}}
\newcommand{\Hom}{\operatorname{Hom}}
\newcommand{\Ext}{\operatorname{Ext}}
\newcommand{\Tor}{\operatorname{Tor}}
\theoremstyle{plain}
\newtheorem{theorem}{Theorem}[section]
\newtheorem{corollary}[theorem]{Corollary}
\newtheorem{lemma}[theorem]{Lemma}
\newtheorem{proposition}[theorem]{Proposition}
\theoremstyle{definition}
\newtheorem{definition}[theorem]{Definition}
\newtheorem{remark}[theorem]{Remark}
\theoremstyle{remark}
\begin{document}

\title[Asymptotic primes]{A convenient category to study asymptotic primes and related questions}
\author{Tony~J.~Puthenpurakal}
\date{\today}
\address{Department of Mathematics, IIT Bombay, Powai, Mumbai 400 076}

\email{tputhen@math.iitb.ac.in}
\subjclass{Primary 13A30; Secondary 13E05, 18A22}
\keywords{Associate primes, coherent functors,  Rees Algebras}

 \begin{abstract}
Let $A$ be a Noetherian ring and let $\mathcal{R} = \bigoplus_{n \geq 0}\R_n$ be a standard graded ring with $\mathcal{R}_0 = A$. We define a category $\mathfrak{A}(\mathcal{R})$ of graded $\mathcal{R}$-modules (not necessarily finitely generated) with the following properties: if $X  = \bigoplus_{n \in \Z} X_n \in \mathfrak{A}(\mathcal{R}) $ then
\begin{enumerate}
  \item  $X_i$ is finitely generated $A$-module for all $i \in \Z$ and $X_i = 0$ for $i \ll 0$.
  \item There exists $n_0$ such that $\Ass_A X_n = \Ass_A X_{n_0}$ for all $n \geq n_0$.
  \item If $X_n$ has finite length  as an $A$-module for all $n$ then there exists $P_X(z) \in \mathbb{Q}[z]$ such that $P_X(n) = \ell_A(X_n)$ for all $n \gg 0$.
  \item If $F$ is a coherent functor on the category of finitely generated $A$-modules then $F(X) = \bigoplus_{n \in \Z} F(X_n) \in \mathfrak{A}(\mathcal{R})$.
  \item For an ideal $J$ in $A$, there exists $c_J^X$ such that \\ $\grade(J, X_n) = \grade(J, X_{c_J^X})$ for all $n \geq c_J^X$.
  \end{enumerate}
We give a unified proof of several results in theory of associate primes and related areas.
\end{abstract}
 \maketitle
\section{introduction}
The theory of asymptotic prime divisors was initiated by Ratliff \cite{R}. Brodmann  \cite[p.\ 16]{Br}  showed that when $M$ is a finite module over a Noetherian commutative ring $A$ and $I$ is an ideal of $A$, then the sequences of sets of associated prime ideals $\Ass_A(I^n M/I^{n+1}M)$ and $\Ass_A(M/I^nM)$, $n = 1, 2, \ldots$ are ultimately constant. Brodmann’s result has many applications and has been generalized in various forms. In \cite[Proposition 2]{ME}, McAdam and Eakin proved that if $R$ is a standard graded
Noetherian ring with $R_0 = A$ and $M$ is a finitely generated graded $A$-module, then the set of
primes $\Ass_A(M_n)$ is stable for all large $n$. They also showed that Brodmann’s result followed
from their result concerning the stability of primes associated to homogeneous components of
graded modules.
Later Brodmann, \cite[Theorem 2(i) and  12(i)]{Br2} proved that if $J$ is an ideal in $A$ then $\grade(J, I^nM/I^{n+1}M)$ and $\grade(J, M/I^{n}M) $ stabilize. He used this result to give a refinement of an inequality due to Burch, \cite[Theorem 2(ii)]{Br2}.  For a nice introduction to the theory of asymptotic primes see \cite{Mc}.

In \cite[Theorem 1]{MS} the authors prove that for each $i \geq 0$ the sets \\ $\Ass_A \Tor^A_i(M, A/I^n)$ and $\Tor^A_i(M, I^n/I^{n+1})$ stabilize. Later in \cite[3.5]{KW} the authors proved that for finitely generated $A$-modules $M, N$ the sets $\Ass_A \Tor^A_i(M, N/I^nN)$ and $\Ass_A \Ext_A^i(M, N/I^nN)$ are stable.

In \cite[Theorem 2]{KV} it was shown that if $A$ is local and $\ell(M\otimes_A N)$ is finite then the functions $n \rt \ell(\Tor^A_i(M, N/I^nN))$ and $n \rt \ell(\Ext^i_A(M, N/I^nN))$ are of polynomial type. In \cite[Corollary 4]{T}
the result was generalized by just assuming that $\Tor^A_i(M, N/I^nN)$ is of finite length for all $n$ then the function $n \rt \ell(\Tor_i^A(M, N/I^nN))$  is of polynomial type (similar result holds for \\  $\Ext^i_A(M, N/I^nN)$).

Recently in a  far-reaching  generalization T.Se, \cite[2.4, 2.7, 2.13]{Se},   showed that if $F$ is a coherent covariant $A$-linear functor from the category of finitely generated $A$-modules to itself the sets $\Ass_A(F(I^nM/I^{n+1}M))$, $\Ass_A(F(M/I^nM))$,and the values  $\grade(J,F(I^nM/I^{n+1}M))$ and $\grade(J, F(M/I^nM))$ become independent of $n$ for large $n$. We note that if $N$ is a finitely generated $A$-module then the functors $\Tor^A_i(N,-)$ and $\Ext^i_A(N,-) $ are coherent for all $i\geq 0$; see \cite[2.4, 2.5]{H}. In \cite[3.3]{BM} it is proved that if $F(M/I^nM)$ has finite length  for all $n$ then
the function $n \rt \ell(F(M/I^nM))$ is of polynomial type.

In \cite[2.1]{KN} the authors proved that if $\R $ is a standard graded subring of a polynomial ring $\Sc = A[X_1, \ldots, X_m]$ with $\R_0 = A$ then the set $\Ass_A \Sc_n/\R_n$ is stable for all $n \gg 0$.
In \cite[1.1]{HF}, this result was generalized  as follows: Let $\R, \Sc$ be standard graded rings with $\R_0 = \Sc_0 = A$ and we have an inclusion of graded rings $\R \subseteq \Sc$. Assume $M$ is a finitely generated graded $\R$-module and $N$ is a finitely generated $\Sc$ module. If $M \subseteq N$ as graded $\R$-modules then $\Ass_A N_n/M_n$ is stable for $n \gg 0$. Furthermore if $J$ is an ideal of $A$ then $\grade(J, N_n/M_n)$ is constant for $n \gg 0$, see \cite[3.1]{HF}.

\s \label{question} In view of \cite{Se} we may ask that if $ F$ is a coherent functor  and $M, N$ as above then
\begin{enumerate}
\item
is
$\Ass_A F(N_n/M_n)$ stable for $n \gg 0$ ?
\item
if $J$ is an ideal in $A$ then is $\grade(J, F(N_n/M_n))$ is constant for $n \gg 0$ ?
\item
if $F(N_n/M_n)$ has finite length for all $n$ then is
the function \\ $n \rt \ell(F(N_n/M_n))$  of polynomial type?
\end{enumerate}
The techniques in \cite{Se}, \cite{BM} and \cite{HF} do not generalize to deal with this case. In this paper we show all the above questions have a positive answer.

\s \label{main-intro} Our technique to prove the above assertions is categorical. Let $\R$ be a standard graded ring with $\R_0 = A$. We concoct a full subcategory $\A(\R)$ of  $\Lmod(\R)$, the category of all graded $\R$-modules, with four properties:
if $X  = \bigoplus_{n \in \Z}X_n\in \A(\R)$  then
\begin{enumerate}
  \item  $X_n$ is finitely generated $A$-module for all $n$ and $X_n = 0$ for $n \ll 0$.
  \item  $\Ass_A X_n$ is stable for all $n \gg 0$
  \item If $\ell(X_n)$ is finite for all $n$ then the function $n \rt \ell (X_n)$ is of polynomial type.
  \item If  $F$ is a coherent functor on the category of finitely generated $A$-modules then $F(X) = \bigoplus_{n \in \Z} F(X_n)\in \A(\R)$.
\end{enumerate}
A formal consequence of the above four properties is that if $J$ is an ideal of $A$ then there exists $c_J^X$ such that  $\grade(J, X_n) = \grade(J, X_{c_J^X})$ for all $n \geq c_J^X$.
As a consequence we give an affirmative answer to all the questions in \ref{question}.

\s \label{cat} We now describe the category that we are interested in.\\
Let $A$ be a Noetherian ring and let $\R = \bigoplus_{n \geq 0}\R_n$ be a standard graded ring with $\R_0 = A$. Let $\gmod(\R)$ denote the category of all finitely generated graded $\R$-modules and let $\Lmod(\R)$ denote the  category of all graded $\R$-modules. Define $\A(\R)$ a full subcategory of $\Lmod(\R)$  iteratively as follows:

Define $\A(\R)(0) = \gmod(\R)$.

A graded $\R$-module $X \in \A(\R)(1)$ if there exists a standard graded $A$-algebra $S$ with $R\subseteq S$ (as graded rings) with $R_0 = S_0$ and there exists $M \in \gmod(R)$ and $N \in \gmod(S)$ with $M \subseteq N$ (as graded $\R$-modules) and $X = N/M$. Clearly $\A(\R)(0) \subseteq  \A(\R)(1)$.

A graded $\R$-module $Y \in \A(\R)(2)$ if there exists  $X \in \A(\R)(1)$ and $D \in \A(\R)(0)$ such that there exists an exact sequence of graded $\R$-modules
\[
0 \rt X \rt Y \rt D \rt 0.
\]
Clearly $\A(\R)(1) \subseteq \A(\R)(2)$. We define $\A(\R) = \A(\R)(2)$.

The main theorem of this paper is
\begin{theorem}\label{main}
Let $A$ be a Noetherian ring and let $\R = \bigoplus_{n \geq 0}\R_n$ be a standard graded ring with $\R_0 = A$. Let $X  = \bigoplus_{n \in \Z} X_n\in \A(\R)$. Then
\begin{enumerate}[\rm (1)]
  \item  $X_i$ is finitely generated $A$-module for all $i \in \Z$ and $X_i = 0$ for $i \ll 0$.
  \item There exists $n_0$ such that $\Ass_A X_n = \Ass_A X_{n_0}$ for all $n \geq n_0$.
   \item If $A$ is local and $X_n$ has finite length  as an $A$-module for all $n$ then there exists $P_X(z) \in \mathbb{Q}[z]$ such that $P_X(n) = \ell_A(X_n)$ for all $n \gg 0$.
  \item If $F$ is a coherent functor on the category of finitely generated $A$-modules then $F(X) = \bigoplus_{n \in \Z} F(X_n)\in \A(\R)$.
\item For an ideal $J$ in $A$, there exists $c_J$ such that \\ $\grade(J, X_n) = \grade(J, X_{c_J})$ for all $n \geq c_J$.
\end{enumerate}
\end{theorem}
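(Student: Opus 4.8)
Here is the plan.

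The plan is to argue by induction on the three layers $\A(\R)(0)\sub\A(\R)(1)\sub\A(\R)(2)=\A(\R)$ used to build the category, reducing the bulk of the theorem to part (4). Part (1) is elementary at each layer: for $\gmod(\R)$ it is standard, and for a quotient $N/M$ (and then for an extension) it follows because kernels and cokernels of degree‑zero maps between finitely generated graded modules over the Noetherian rings $\R$ and $S$ are again finitely generated and vanish in degrees $\ll 0$. Part (2) is handled directly: at layer $0$ it is \cite[Proposition 2]{ME}, at layer $1$ it is \cite[1.1]{HF}, and at layer $2$, writing $0\rt X'\rt Y\rt D\rt 0$ with $X'\in\A(\R)(1)$, $D\in\gmod(\R)$, one notes that only finitely many primes can be associated to some $Y_n$; for $\q$ associated stably to $X'$ this is clear, while for the remaining $\q$ one has, for $n\gg 0$, that $\Hom_A(A/\q,Y_n)$ is the $n$‑th graded piece of $\ker\big(\Hom_A(A/\q,D)\rt\Ext^1_A(A/\q,X')\big)$, a finitely generated graded $\R$‑module (here $\Hom_A(A/\q,D)\in\gmod(\R)$), whose components are therefore eventually zero or eventually nonzero. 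Part (5) then follows from (1), (2) and (4): for a finitely generated $A$‑module $\grade(J,X_n)=\min\{i:\Ext^i_A(A/J,X_n)\neq 0\}$ (or $+\infty$ if $JX_n=X_n$), a value bounded uniformly in $n$; since $\Ext^i_A(A/J,-)$ and $A/J\otimes_A-$ are coherent, (4) gives $\Ext^i_A(A/J,X)\in\A(\R)$ and $A/J\otimes_AX\in\A(\R)$, and (2) applied to these shows that $\{i:\Ext^i_A(A/J,X_n)\neq 0\}$ and the condition $JX_n=X_n$ are eventually independent of $n$.

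The key technical lemma is: if $P$ is a finitely generated $A$‑module and $T$ a finitely generated graded module over a standard graded $A$‑algebra $S$ with $S_0=A$, then $\Hom_A(P,T)$, $\Ext^i_A(P,T)$ and $\Tor^A_i(P,T)$ are again finitely generated graded $S$‑modules, each being the (co)homology of a complex whose terms are finite direct sums of copies of $T$, and $S$ Noetherian. Applying this with $S=\R$ shows $\gmod(\R)$ is closed under coherent functors: write $F=\coker(\Hom_A(Q,-)\rt\Hom_A(P,-))$ and note $\Hom_A(P,X)=\ker(X^{b}\rt X^{a})\in\gmod(\R)$, hence $F(X)\in\gmod(\R)$; this is (4) at layer $0$. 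Part (3) at layer $0$ is the Hilbert‑polynomial theorem for finitely generated graded modules, together with the observation that finite length of every graded component forces eventual polynomiality even over a non‑local base — via a homogeneous prime filtration and the fact that a torsion‑free finite‑length module over a domain vanishes unless the domain is a field.

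For (4) at layer $1$, let $X=N/M$ with $M\in\gmod(\R)$, $N\in\gmod(S)$, and apply $\Hom_A(P,-)$, $\Ext^i_A(P,-)$, $\Tor^A_i(P,-)$ to $0\rt M\rt N\rt X\rt 0$. Reading the resulting long exact sequences in each degree, each of $\Hom_A(P,X)$, $\Ext^i_A(P,X)$, $\Tor^A_i(P,X)$ sits in a short exact sequence of graded $\R$‑modules
\[
0\rt U\rt(-)\rt V\rt 0
\]
in which $U$ is a finitely generated graded $S$‑module modulo a finitely generated graded $\R$‑submodule (so $U\in\A(\R)(1)$, using the key lemma for $N$) and $V$ is a graded $\R$‑submodule of a module in $\gmod(\R)$ (so $V\in\A(\R)(0)$); hence these modules lie in $\A(\R)$. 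For a general coherent $F=\coker(\Hom_A(Q,-)\xrightarrow{f^{*}}\Hom_A(P,-))$ with $f\colon P\rt Q$, factor $f$ through its image and express $F(X)$ as an iterated extension assembled from $\Hom_A(Z,X)$ and $\Ext^1_A(C,X)$, where $Z=\ker f$ and $C=\coker f$ are finitely generated; combining the previous sentence with a check that the connecting maps respect these two‑term filtrations — so that the images and kernels occurring retain the shape ``finitely generated graded $S$‑module modulo a finitely generated graded $\R$‑submodule, extended by a module in $\gmod(\R)$'' — yields $F(X)\in\A(\R)$. Finally, for $Y\in\A(\R)(2)$ with $0\rt X'\rt Y\rt D\rt 0$, applying $\Hom_A(P,-)$ and forming the cokernel against $\Hom_A(Q,-)$, and invoking the layer‑$1$ results together with $\Hom_A(P,D),\Ext^i_A(P,D)\in\gmod(\R)$, gives $F(Y)\in\A(\R)$.

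For (3) at layer $2$, $\ell_A(Y_n)=\ell_A(X'_n)+\ell_A(D_n)$ (all finite, being a submodule and quotient of $Y_n$), and both summands are eventually polynomial by the layer $0$ and $1$ cases. At layer $1$ (with $A$ local), with $X=N/M$, split off the $\m$‑torsion of $N$: set $\Gamma=\Gamma_\m(N)\sub N$, a finitely generated graded $S$‑submodule killed by a fixed power of $\m$, giving $0\rt \Gamma/(\Gamma\cap M)\rt X\rt N/(M+\Gamma)\rt 0$. The left term is annihilated by a power of $\m$, hence is a layer‑$1$ object over $\R/\m^{k}\R$, a standard graded algebra over the Artinian ring $A/\m^{k}$, where numerator and denominator already have finite length, so its Hilbert function is a difference of two eventually polynomial ones; the right term is again layer‑$1$ but now with positive $\m$‑depth, and one finishes by induction on $\dim_S N$ (passing to $N/zN$ for a degree‑one nonzerodivisor $z$ on $N$, after the harmless flat extension $A\rt A[t]_{\m A[t]}$ to make the residue field infinite). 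The step I expect to be the main obstacle is the bookkeeping in part (4) for a general coherent functor: $\A(\R)$ is \emph{not} closed under arbitrary graded submodules or quotients, so one cannot argue abstractly and must verify by hand that every submodule, quotient and extension produced by the long exact sequences keeps the special form above; the dimension induction in (3) at layer $1$ is a secondary technical hurdle.
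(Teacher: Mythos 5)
Your treatment of (1), (2) and (5) is essentially sound (for (2) at layers $0$ and $1$ you cite known results that the paper reproves, and your layer-$2$ argument via $\ker\bigl(\Hom_A(A/\q,D)\rt\Ext^1_A(A/\q,X')\bigr)$ works after localizing at $\q$). The two places you yourself flag as the hard steps are, however, genuine gaps, and in both cases the route you propose breaks rather than merely requiring bookkeeping.

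For (3) at layer $1$ the dimension induction cannot be run. Multiplication by a degree-one element $z\in\Sc_1$ carries $N_{n-1}$ into $N_n$ but does \emph{not} carry $M_{n-1}$ into $M_n$, because $M$ is only an $\R$-submodule of $N$, not an $\Sc$-submodule; hence there is no induced map $(N/M)(-1)\rt N/M$ and no exact sequence relating $N/M$ to $N/(M+zN)$. One also cannot write $\ell(N_n/M_n)=\ell(N_n)-\ell(M_n)$, since neither term is finite. This is precisely the obstruction the paper names ($N/M$ is not an $\Sc$-module, nor finitely generated over $\R$) and circumvents in \ref{const}: after normalizing so that $M$ and $N$ are generated in degree $0$, pass to the trivial extensions $\wt{\R}=\R\ltimes M[-1]\subseteq\wt{\Sc}=\Sc\ltimes N[-1]$, form the Rees algebra $T=R(\wt{\R}_+\wt{\Sc},\wt{\Sc})$ and its associated graded ring $G$, and realize $\ell(N_{j-1}/M_{j-1})$ as the length of the degree-$j$ component of an ideal $L^*$ of the Noetherian standard graded ring $G$. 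That construction is the missing idea; your splitting off of $\Gamma_{\m}(N)$ does not substitute for it.

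For (4) at layer $1$ with a general coherent $F$, the decomposition through $Z=\ker f$ and $C=\coker f$ writes $F(X)$ as an extension of $\ker\bigl(\Hom_A(Z,X)\rt\Ext^1_A(I,X)\bigr)$ by $\ker\bigl(\Ext^1_A(C,X)\rt\Ext^1_A(Q,X)\bigr)$, and these kernels do \emph{not} retain the required shape. Chasing your two-term filtrations, the sub-quotients that appear are preimages, under $\Sc$-linear maps such as $\Hom_A(Z,N)\rt\Ext^1_A(I,N)$, of finitely generated $\R$-submodules; such a preimage is an $\R$-submodule of a finitely generated $\Sc$-module that is in general neither an $\Sc$-submodule nor finitely generated over $\R$, so it is not of the form (f.g.\ $\Sc$-module)/(f.g.\ $\R$-submodule). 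Moreover, even if both pieces lay in $\A(\R)$, an extension of one $\A(\R)(2)$-object by another carries a four-step filtration and is not covered by the two-step definition of $\A(\R)$ (the category is closed under extensions only when the quotient is in $\gmod(\R)$). The workable route is the paper's: stay with the presentation $h_U\rt h_V\rt F\rt 0$, apply both functors to $0\rt M\rt N\rt X\rt 0$, and use the snake lemma twice, so that the only operations ever performed on the $\Sc$-side objects are cokernels — quotients by finitely generated $\R$-submodules, handled by Proposition \ref{q-fg} — followed by one extension with quotient in $\gmod(\R)$, handled by Lemma \ref{closed}.
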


We now describe in brief the contents of this paper. In section two we discuss some preparatory results on $\A(\R)$ that we need. In section three we recall the notion of coherent functors and prove that if $X = \bigoplus_{n \in \Z}X_n \in \A(\R)$ and $F$ is a coherent functor of finitely generated $A$-modules then $F(X) = \bigoplus_{n\in \Z} F(X_n) \in \A(\R)$. In section four we recall the notion of quasi-finite modules and prove some results that we need. In section five we prove an analogue of a result due to Amao that we need. In section six we prove Theorem \ref{main} and give an affirmative answer to the questions raised in \ref{question}.
\section{Some preparatory results on $\A(\R)$ }
In this section we prove some preparatory results on $\A(\R)$ that we need. \\ Throughout $A$ is a Noetherian ring and $R$ is  a standard graded algebra with $R_0 = A$.
\begin{lemma}
\label{closed}  Let $0 \rt Y \xrightarrow{i} Z \rt D \rt 0$  be an exact sequence of graded $\R$-modules with $Y \in \A(\R)$ and $D \in \gmod(\R)$.
Then $Z \in \A(\R)$.
\end{lemma}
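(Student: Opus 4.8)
The plan is to unwind the (two-step) definition of $\A(\R) = \A(\R)(2)$ and absorb the extra quotient $D$ into the finitely generated ``cokernel part'' of the filtration of $Y$. Since $Y \in \A(\R)(2)$, by definition there is a short exact sequence of graded $\R$-modules $0 \rt W \rt Y \rt E \rt 0$ with $W \in \A(\R)(1)$ and $E \in \A(\R)(0) = \gmod(\R)$ (if $Y$ happens to already lie in $\A(\R)(1)$ or $\A(\R)(0)$, take $W = Y$, $E = 0$, using $\A(\R)(0) \sub \A(\R)(1)$). Identifying $W$ with its image in $Y$ and then, via $i$, with a graded $\R$-submodule of $Z$, this image is again in $\A(\R)(1)$: the class $\A(\R)(1)$ is stable under isomorphism, since any module isomorphic to a quotient $N/M$ as in its definition is itself such a quotient after transporting the $S$-module structure along the isomorphism.

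Next I would pass to $Z/W$. The inclusion $i$ induces a map $Y/W \rt Z/W$ which is injective (an element of $Y$ whose $i$-image lies in $W \sub Z$ already lies in $W$, because $i$ is injective), and its cokernel is $Z/i(Y) \cong D$. Thus there is an exact sequence $0 \rt Y/W \rt Z/W \rt D \rt 0$ of graded $\R$-modules. Here $Y/W \cong E$ is a finitely generated graded $\R$-module and $D \in \gmod(\R)$, so $Z/W$ is an extension of a finitely generated graded $\R$-module by a finitely generated graded $\R$-module and is therefore finitely generated; that is, $Z/W \in \gmod(\R) = \A(\R)(0)$.

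Assembling the pieces, we obtain a short exact sequence of graded $\R$-modules $0 \rt W \rt Z \rt Z/W \rt 0$ with $W \in \A(\R)(1)$ and $Z/W \in \A(\R)(0)$, which is precisely the defining condition for membership in $\A(\R)(2) = \A(\R)$. Hence $Z \in \A(\R)$.

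This argument is essentially formal. The only points needing a line of justification are that $\A(\R)(1)$ is closed under isomorphism and the elementary fact that an extension of finitely generated graded modules over a Noetherian graded ring is finitely generated; I do not anticipate a genuine obstacle. The one thing to keep an eye on is making sure all maps and submodule identifications are taken in the graded category, but since $i$ and the quotient maps are homogeneous this causes no difficulty.
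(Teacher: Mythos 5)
Your proof is correct and is essentially the paper's argument: the paper forms the pushout $C$ of $i$ and the projection $Y\to Y/X$, which is exactly your quotient $Z/W$, and concludes in the same way from the extension $0\to X\to Z\to C\to 0$. The only cosmetic difference is that you phrase the construction via quotients rather than pushouts.
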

\begin{proof}
As $Y$ is in $\A(\R)$ there exists an exact sequence of $\R$-modules $0 \rt X \rt Y \xrightarrow{\pi} V \rt 0$ with $X \in \A(\R)(1)$ and $D \in \gmod(\R)$.
Let $C$ be the pushout of $i$ and $\pi$. Thus we have a commutative diagram with exact rows
\[
  \xymatrix
{
 0
 \ar@{->}[r]
  & Y
\ar@{->}[r]^{i}
\ar@{->}[d]^{\pi}
 & Z
\ar@{->}[r]^{p}
\ar@{->}[d]^{g}
& D
\ar@{->}[r]
\ar@{->}[d]^{j}
&0
\\
 0
 \ar@{->}[r]
  &V
\ar@{->}[r]^{i^\prime}
 & C
\ar@{->}[r]^{p^\prime}
& D
    \ar@{->}[r]
    &0
\
 }
\]
Here $j$ is the identity map on $D$. We note that $C \in \gmod(\R)$. Furthermore $g$ is surjective with $\ker g \cong X$. It follows that
$Z \in \A(\R)$.
\end{proof}
The category $\A(\R)$ is NOT abelian. However certain cokernels exist as we show in our next result.
\begin{proposition}\label{q-fg}
Let $D \in \gmod(\R)$ and let $X \in \A(\R)$. If $D \subseteq X$ then $X/D \in \A(\R)$.
\end{proposition}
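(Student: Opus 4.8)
The plan is to reduce the general case $X \in \A(\R)$ to the two building blocks used to define $\A(\R)$, namely $\A(\R)(1)$ and extensions by $\gmod(\R)$. Since $X \in \A(\R) = \A(\R)(2)$, there is an exact sequence $0 \rt X' \rt X \rt E \rt 0$ with $X' \in \A(\R)(1)$ and $E \in \gmod(\R)$. Given the submodule $D \subseteq X$ with $D \in \gmod(\R)$, I would first set $D' = D \cap X'$ and form the induced diagram
\[
\xymatrix{
0 \ar[r] & D' \ar[r] \ar[d] & D \ar[r] \ar[d] & D'' \ar[r] \ar[d] & 0 \\
0 \ar[r] & X' \ar[r] & X \ar[r] & E \ar[r] & 0
}
\]
where $D'' $ is the image of $D$ in $E$. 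Both $D'$ and $D''$ are finitely generated graded $\R$-modules (submodules of finitely generated modules over the Noetherian ring $\R$). Passing to cokernels of the vertical maps gives an exact sequence $0 \rt X'/D' \rt X/D \rt E/D'' \rt 0$. Now $E/D'' \in \gmod(\R)$, so by Lemma \ref{closed} it suffices to show $X'/D' \in \A(\R)$.

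This reduces the proposition to the case $X \in \A(\R)(1)$, which is the heart of the argument. So assume $X = N/M$ where $R \subseteq S$ is an inclusion of standard graded $A$-algebras with $R_0 = S_0$, $M \in \gmod(R)$, $N \in \gmod(S)$, and $M \subseteq N$ as graded $\R$-modules. The submodule $D \subseteq X = N/M$ corresponds to a graded $\R$-submodule $M \subseteq D^* \subseteq N$ with $D^*/M = D$ and $D^*$ finitely generated as an $\R$-module (since $D$ is and $M$ is). Then $X/D = (N/M)/(D^*/M) \cong N/D^*$. The obstacle here is that $D^*$ need not be an $S$-submodule of $N$, so $N/D^*$ is not literally presented in the form "$N'/M'$ with $M'$ finitely generated over $R$ and $N'$ finitely generated over $S$." To fix this, let $D^{**} = S D^* \subseteq N$ be the $S$-submodule of $N$ generated by $D^*$; this is finitely generated over $S$. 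Then $D^{**}/D^*$ is a finitely generated graded $\R$-module, and we have an exact sequence
\[
0 \rt D^{**}/D^* \rt N/D^* \rt N/D^{**} \rt 0.
\]
Here $N/D^{**}$ is a finitely generated graded $S$-module, hence lies in $\gmod(S) \subseteq \A(\R)(1) \subseteq \A(\R)$ (taking $M' = 0$). Wait — I must be careful: $N/D^{**}$ is finitely generated over $S$ but I need it as an object of $\A(\R)$, and indeed $N/D^{**} = (N/D^{**})/0$ with $0 \in \gmod(R)$ and $N/D^{**} \in \gmod(S)$, so $N/D^{**} \in \A(\R)(1)$. Now apply Lemma \ref{closed} once more to the displayed sequence: $N/D^{**} \in \A(\R)$ and $D^{**}/D^*$ is in $\gmod(\R)$, so $N/D^* \in \A(\R)$. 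Therefore $X/D \cong N/D^* \in \A(\R)$, completing the reduced case.

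The main obstacle, as flagged, is the mismatch between the $\R$-module structure of an arbitrary submodule $D^* \supseteq M$ and the $S$-module structure required by the definition of $\A(\R)(1)$; this is handled by the saturation step $D^* \leadsto D^{**} = SD^*$ together with two applications of Lemma \ref{closed}, one in the general-to-$\A(\R)(1)$ reduction and one inside the $\A(\R)(1)$ case. I should double-check the direction of the arrows and that $D^{**}/D^*$ really is finitely generated over $\R$ (it is, being a quotient of $D^{**}$ which is finitely generated even over $S$, hence over $\R$ since $S$ is module-finite... — actually $S$ need not be module-finite over $R$, so instead note $D^{**}/D^*$ is annihilated appropriately or simply that it is a subquotient of $N$ which is Noetherian as an $S$-module, and $D^{**}/D^*$ is finitely generated over $S$ and killed by no obvious ideal — the cleanest route is: $D^{**}$ is generated over $S$ by finitely many elements, so $D^{**} = D^* + S_{\geq 1}D^* + \cdots$, and in each degree $D^{**}_n/D^*_n$ is a finitely generated $A$-module; one checks $D^{**}/D^*$ is a finitely generated graded $\R$-module by a standard argument since $N/M \in \A(\R)$ already has finitely generated components — but in fact the simplest observation is that $D^{**}/D^*$ embeds in $N/D^* $ which is a quotient of the Noetherian $\R$-module... hmm, $N$ is not Noetherian over $\R$). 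The safe statement to use is Proposition \ref{q-fg} itself is being proved, so I will argue directly: $D^{**}$ is a finitely generated graded $S$-module contained in $N$, so $N/D^{**} \in \gmod(S)$; the subtle point is whether $D^{**}/D^*$ lies in $\gmod(\R)$, and here I use that $D^* \supseteq M$, so $D^{**}/D^* = SD^*/D^*$ is a quotient of $SD^*/SM$... I would instead choose the cleanest bookkeeping at write-up time, the essential mechanism being the two exact sequences above plus Lemma \ref{closed}.
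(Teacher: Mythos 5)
Your reduction from $X\in\A(\R)(2)$ to the case $X\in\A(\R)(1)$ (intersect $D$ with the sub $X'$, pass to cokernels, apply Lemma \ref{closed} to $0\rt X'/D'\rt X/D\rt E/D''\rt 0$) is correct and is essentially what the paper does. The problem is in the case $X=N/M\in\A(\R)(1)$, which you correctly identify as the heart of the matter but then get wrong. The ``obstacle'' you flag --- that the preimage $D^*$ of $D$ in $N$ need not be an $\Sc$-submodule --- is not an obstacle at all: the definition of $\A(\R)(1)$ only requires $M\subseteq N$ to be an inclusion of \emph{graded $\R$-modules} with $M\in\gmod(\R)$ and $N\in\gmod(\Sc)$; it never asks $M$ to be an $\Sc$-submodule. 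Since $D^*$ is an extension of $D$ by $M$, both in $\gmod(\R)$, we have $D^*\in\gmod(\R)$, and $X/D\cong N/D^*$ is \emph{already} of the required form. This one-line observation is the paper's entire argument (there $D^*$ is called $L$), and it is the step you should have taken.

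The saturation detour $D^*\leadsto D^{**}=\Sc D^*$ that you substitute for it contains two genuine errors. First, $D^{**}/D^*$ need not lie in $\gmod(\R)$: take $\R=A[y]\subseteq\Sc=A[x,y]$, $M=0$, $N=\Sc$, $D=D^*=A[y]$; then $D^{**}=\Sc$ and $D^{**}/D^*=A[x,y]/A[y]$ is not a finitely generated $\R$-module. (Your closing paragraph shows you sensed this and could not repair it.) Second, even granting that claim, your application of Lemma \ref{closed} to $0\rt D^{**}/D^*\rt N/D^*\rt N/D^{**}\rt 0$ reverses the lemma's hypotheses: the lemma needs the \emph{sub} to be in $\A(\R)$ and the \emph{quotient} to be in $\gmod(\R)$, whereas your quotient $N/D^{**}$ is only finitely generated over $\Sc$. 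Neither the definition of $\A(\R)(2)$ nor either lemma gives closure under extensions in that orientation, so the argument does not go through as written.
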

\begin{proof}
  We show step-by step that if $X \in \A(\R)(i)$ then $X/D \in \A(\R)(i)$. We have nothing to show when $i = 0$ as $\A(\R)(0) = \gmod(\R)$. Let $X \in \A(\R)(1)$. Then there exists a standard graded algebra $S$ with $S_0 = A$ with an inclusion of graded rings $\R \subseteq S$ and $M \in \gmod(\R)$ and $N \in \gmod(S)$ with $M \subseteq N$ and $N/M = X$. We have a commutative diagram
  \[
  \xymatrix
{
 0
 \ar@{->}[r]
  & M
\ar@{->}[r]
\ar@{->}[d]^{\phi}
 & N
\ar@{->}[r]
\ar@{->}[d]^{1_N}
& X
\ar@{->}[r]
\ar@{->}[d]^{\pi}
&0
\\
 0
 \ar@{->}[r]
  &L
\ar@{->}[r]
 & N
\ar@{->}[r]
& X/D
    \ar@{->}[r]
    &0
\
 }
\]
We note $\phi$ is injective and $\coker \phi = D$. So $L \in \gmod(\R)$. It follows that $X/D \in \A(\R)(1)$.

Now let $X \in \A(\R)(2)$. Then there exists an exact sequence
$$0 \rt Y \xrightarrow{\phi} X \xrightarrow{f} V \rt 0 \quad \text{with} \ Y \in \A(\R)(1), \text{and} \ V \in \gmod(\R).$$
Let $V' = f(D)$. Then we have a surjective map $\ov{f} \colon X/D \rt V/V'$. Let $\pi \colon X \rt X/D$ be the canonical map.
 \[
  \xymatrix
{
 0
 \ar@{->}[r]
  & Y
\ar@{->}[r]^{\phi}
\ar@{->}[d]^{\xi}
 & X
\ar@{->}[r]^{f}
\ar@{->}[d]^{\pi}
& V
\ar@{->}[r]
\ar@{->}[d]
&0
\\
 0
 \ar@{->}[r]
  &L
\ar@{->}[r]
 & X/D
\ar@{->}[r]^{\ov{f}}
& V/V'
    \ar@{->}[r]
    &0
\
 }
\]
We have $\ker \xi \subseteq D$. So as argued before we get that $\image \xi \in \A(\R)(1)$. Note $\coker \xi $ is a homorphic image of $V' = f(D)$. It follows that $L \in \A(\R)(2)$. As $V/V' \in \gmod(\R)$ it follows from \ref{closed} that $X/D \in \A(\R)$.
\end{proof}
\section{Coherent functors}
Coherent functors in the general setting of abelian categories were introduced by Auslander, \cite{A}. In our setting, coherent functors on the category of modules over a Noetherian commutative ring, an extensive treatment was made by Hartshorne, \cite{H}.
Let us first recall the notion of coherent functors. Let $A$ be a Noetherian ring and let $\smod(A)$ denote the category of finitely generated $A$-modules.
Let $\F$ denote the category of all covariant $A$-linear functors from $\smod(A)$ to $\smod(A)$. Morphisms in $\F$ are given by natural transformation of functors. It is easily proved that $\F$ is an abelian category. Let $U \in \smod(A)$. Set $h_U(-) = \Hom_A(U, -)$.
\begin{definition}
Let $F \in \F$.
  \begin{enumerate}
    \item $F$ is said to be representable if $F \cong h_U$ for some $U \in \smod(A)$.
    \item $F$ is said to be coherent if there exists an exact sequence of functors
    $$ h_U \rt h_V \rt F \rt 0.$$
  \end{enumerate}
\end{definition}
\begin{remark}
  A natural transformation of $A$-linear functors $h_U \rt h_V$ is induced by a unique $A$-linear map $V \rt U$, see \cite[1,2]{H}.
\end{remark}

\s \label{setup} Let $\R = \bigoplus_{n \geq 0} \R_n$ be a standard graded ring with $\R_0 = A$. Let $X = \bigoplus_{n \in \Z} X_n \in \Lmod(\R)$. Let $F\in \F$. Set $F(X) = \bigoplus_{n \in \Z} F(X_n)$. We note that if $U \in \smod(A)$ then $h_U(X)$ is an $\R$-module. Furthermore if $\theta \colon h_U \rt h_V$ is a natural transformation where $U, V \in \smod(A)$ and $\theta$ induced by $f \colon V \rt U$ then $\theta^*_X \colon h_U(X) \rt h_V(X)$ is $\R$-linear. It follows that if $F \in \F$ is coherent then $F(X)$ is an $\R$-module.

\s \label{fg} (with hypotheses as in \ref{setup}). Let $X \in \gmod(\R)$. Then note that $h_U(X) \in \gmod(X)$ for any $U \in \smod(A)$. It follows that if $F$ is coherent then $F(X) \in \gmod(\R)$.

The following is the main result of this section.
\begin{theorem}
\label{coh-ar}(with hypotheses as in \ref{setup}). Let $F \in \F$ be a coherent functor. If $X \in \A(\R)$ then $F(X) \in \A(\R)$.
\end{theorem}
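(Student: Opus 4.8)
The plan is to induct on the defining filtration $\A(\R)(0)\subseteq\A(\R)(1)\subseteq\A(\R)(2)=\A(\R)$. Fix a presentation $h_U\xrightarrow{\theta}h_V\to F\to 0$ of $F$, with $\theta$ induced by an $A$-linear map $f\colon V\to U$, so that $F(Z)=\coker\bigl(\theta_Z\colon\Hom_A(U,Z)\to\Hom_A(V,Z)\bigr)$ for every $Z\in\smod(A)$; this is the only structural input about $F$ beyond the standard naturality of the connecting homomorphisms in long exact $\Ext$ sequences. The base case $X\in\A(\R)(0)=\gmod(\R)$ is exactly \ref{fg}. I would first record two elementary facts used repeatedly: (i) for a finitely generated $A$-module $W$ and $i\ge 0$, both $\Hom_A(W,-)$ and $\Ext^i_A(W,-)$ send $\gmod(\R)$ to $\gmod(\R)$ and $\gmod(S)$ to $\gmod(S)$, since after picking $A^k\twoheadrightarrow W$ and a resolution of $W$ by finitely generated free $A$-modules one identifies $\Hom_A(W,Y)$ with a submodule of $Y^k$ and $\Ext^i_A(W,Y)$ with a subquotient of a finite direct sum of copies of $Y$, these identifications being linear over $\R$, resp.\ over the (Noetherian) ring $S$, and graded as in \ref{setup} since $W$ is finitely generated; and (ii) $\gmod(S)\subseteq\A(\R)(1)$, by taking $M=0$ in the definition of $\A(\R)(1)$. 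Since $\A(\R)$ is not abelian one cannot simply apply $F$ to a short exact sequence and stay in $\A(\R)$; the device in both inductive steps is to apply the two left-exact functors $\Hom_A(U,-)$ and $\Hom_A(V,-)$ to the defining short exact sequence of $X$ and run the snake lemma, keeping track of which terms are finitely generated over $\R$ and which lie in $\gmod(S)$.

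For $X\in\A(\R)(1)$, write $X=N/M$ with $M\in\gmod(\R)$, $N\in\gmod(S)$ and $M\subseteq N$, and apply $\Hom_A(U,-)$ and $\Hom_A(V,-)$ to $0\to M\to N\to X\to 0$. Naturality of $\theta$ and of the connecting maps yields a morphism of short exact sequences from $0\to P_U\to\Hom_A(U,X)\to E_U\to 0$ to $0\to P_V\to\Hom_A(V,X)\to E_V\to 0$, where $P_U=\Hom_A(U,N)/\Hom_A(U,M)$ and $E_U\subseteq\Ext^1_A(U,M)$, and similarly over $V$. The snake lemma then puts $F(X)=\coker\bigl(\Hom_A(U,X)\to\Hom_A(V,X)\bigr)$ into an exact sequence $0\to Q\to F(X)\to\coker(E_U\to E_V)\to 0$, where $Q$ is $\coker(P_U\to P_V)$ modulo a homomorphic image of $\ker(E_U\to E_V)$. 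By (i), $E_V\subseteq\Ext^1_A(V,M)\in\gmod(\R)$, so $\coker(E_U\to E_V)\in\gmod(\R)$. A second application of the snake lemma --- to the morphism from $0\to\Hom_A(U,M)\to\Hom_A(U,N)\to P_U\to 0$ to its $V$-analogue --- identifies $\coker(P_U\to P_V)$ with $F(N)$ modulo the image of $F(M)$; here $F(N)=\coker\bigl(\Hom_A(U,N)\to\Hom_A(V,N)\bigr)$ is a quotient of $\Hom_A(V,N)\in\gmod(S)$, hence $F(N)\in\gmod(S)\subseteq\A(\R)$ by (i) and (ii), while $F(M)\in\gmod(\R)$ by (i), so the image of $F(M)$ in $F(N)$ is finitely generated over $\R$. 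Proposition \ref{q-fg} gives $\coker(P_U\to P_V)\in\A(\R)$, and since $\ker(E_U\to E_V)$, being a submodule of $\Ext^1_A(U,M)\in\gmod(\R)$, is finitely generated over $\R$, a further application of Proposition \ref{q-fg} gives $Q\in\A(\R)$. Finally Lemma \ref{closed} yields $F(X)\in\A(\R)$.

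For $X\in\A(\R)(2)$, choose $0\to X_1\to X\to D\to 0$ with $X_1\in\A(\R)(1)$ and $D\in\gmod(\R)$, and again apply $\Hom_A(U,-)$ and $\Hom_A(V,-)$. Left-exactness gives short exact sequences $0\to\Hom_A(U,X_1)\to\Hom_A(U,X)\to C_U\to 0$ with $C_U:=\image\bigl(\Hom_A(U,X)\to\Hom_A(U,D)\bigr)\subseteq\Hom_A(U,D)$, and similarly over $V$; by (i), $\Hom_A(U,D)$ and $\Hom_A(V,D)$ lie in $\gmod(\R)$, so $C_U,C_V\in\gmod(\R)$. The snake lemma for the induced morphism of these two sequences puts $F(X)=\coker\bigl(\Hom_A(U,X)\to\Hom_A(V,X)\bigr)$ into an exact sequence $0\to Q\to F(X)\to\coker(C_U\to C_V)\to 0$, where $\coker(C_U\to C_V)\in\gmod(\R)$ and $Q$ is $F(X_1)=\coker\bigl(\Hom_A(U,X_1)\to\Hom_A(V,X_1)\bigr)$ modulo a homomorphic image of $\ker(C_U\to C_V)$, hence modulo a finitely generated $\R$-submodule. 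By the inductive case $F(X_1)\in\A(\R)$, so $Q\in\A(\R)$ by Proposition \ref{q-fg}, and then $F(X)\in\A(\R)$ by Lemma \ref{closed}. This completes the induction.

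The main obstacle --- and the reason the proof must exploit the two-step structure of $\A(\R)$ together with Lemma \ref{closed} and Proposition \ref{q-fg}, rather than being a one-line homological remark --- is that $\A(\R)$ is not abelian: $F(X)$ is by construction a cokernel, a quotient of $\Hom_A(V,X)\in\A(\R)$ by the image of $\Hom_A(U,X)$, and that image is typically not finitely generated over $\R$, so Proposition \ref{q-fg} does not apply to it directly. The substance of the proof is to organize the snake-lemma bookkeeping so that every auxiliary object that appears is either finitely generated over $\R$ (where Proposition \ref{q-fg} applies) or finitely generated over the overring $S$ (so it already lies in $\A(\R)(1)$), and then to reassemble $F(X)$ from these pieces via Lemma \ref{closed}. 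The remaining points are routine and I would settle them once at the outset: that $\Hom_A(U,N)$ is naturally a graded $S$-module with $\theta_N$ an $S$-linear map (so that $F(N)$ is an $S$-module), that $\Hom_A(U,-)$ on graded modules decomposes as $\bigoplus_n\Hom_A(U,N_n)$ as in \ref{setup}, and the naturality of the connecting maps invoked above.
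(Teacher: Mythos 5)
Your proposal is correct and follows essentially the same route as the paper: induction on the filtration $\A(\R)(0)\subseteq\A(\R)(1)\subseteq\A(\R)(2)$, applying $h_U$ and $h_V$ to the defining short exact sequence, and using the snake lemma to express $F(X)$ as an extension of a finitely generated $\R$-module by a quotient (via \ref{q-fg}) of something built from $F(M)$, $F(N)$ (resp.\ $F(Y)$), reassembled with Lemma \ref{closed}. Your $P_U$, $E_U$, $C_U$ are the paper's $W_U$, $D_U$, $D_U$, and the only cosmetic difference is that you route one step through the explicit observation $\gmod(S)\subseteq\A(\R)(1)$ and Proposition \ref{q-fg} where the paper verifies membership in $\A(\R)(1)$ directly.
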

\begin{proof}
Let $h_U \rt h_V \rt F \rt 0$ where $U, V \in \smod(A)$.
  We prove by induction that if $X \in \A(\R)(i)$ (for i = 0, 1,2) then $F(X) \in \A(\R)$. The case when $i = 0$ follows from \ref{fg}.

  Let $X \in \A(\R)(1)$. Then there exists a standard graded ring $\Sc$, an inclusion of graded rings $\R \rt \Sc$ with $\R_0 = \Sc_0 = A$ and $M \in \gmod(\R)$ and $N \in \gmod(\Sc)$ with an exact sequence
  \begin{equation*}
    0 \rt M \rt N \rt X \rt 0  \tag{*}
  \end{equation*}
   of $\R$-modules.  We apply $h_U(-)$ and $h_V(-)$ then we have the following two commutative diagrams with exact rows
   \[
  \xymatrix
{
 0
 \ar@{->}[r]
  & h_U(M)
\ar@{->}[r]
\ar@{->}[d]^{g_M}
 & h_U(N)
\ar@{->}[r]^{p}
\ar@{->}[d]^{g_N}
& W_U
\ar@{->}[r]
\ar@{->}[d]^{\xi}
&0
\\
 0
 \ar@{->}[r]
  &h_V(M)
\ar@{->}[r]
 & h_V(N)
\ar@{->}[r]
& W_V
    \ar@{->}[r]
    &0
\
 }
\]
The second commutative diagram is
\[
  \xymatrix
{
 0
 \ar@{->}[r]
  & W_U
\ar@{->}[r]
\ar@{->}[d]^{\xi}
 & h_U(X)
\ar@{->}[r]
\ar@{->}[d]^{g_X}
& D_U
\ar@{->}[r]
\ar@{->}[d]^{\phi}
&0
\\
 0
 \ar@{->}[r]
  &W_V
\ar@{->}[r]
 & h_V(X)
\ar@{->}[r]
& D_V
    \ar@{->}[r]
    &0
\
 }
\]
Here $D_U, D_V$  are $\R$-submodules of $\Ext_A^1(U, M)$ and $\Ext^1_A(V, M)$ and so are finitely generated $\R$-modules.\\
By the first commutative diagram, by applying the snake lemma we get an exact sequence of $\R$-modules
\[
F(M) \xrightarrow{\theta} F(N) \rt \coker(\xi) \rt 0.
\]
We notice that $F(M) \in \gmod(\R)$ and $F(N) \in \gmod(\Sc)$. As $\theta$ is $\R$-linear we have $\image(\theta) \in \gmod(\R)$.
It follows that $\coker(\xi) \in \A(\R)(1)$.

By the second commutative diagram, by applying snake Lemma we obtain an exact sequence of $\R$-modules
\[
\ker(\phi) \xrightarrow{\delta} \coker(\xi) \rt F(X) \rt \coker(\phi) \rt 0.
\]
We note that $\coker(\delta) \in \A(\R)(1)$ as $\ker(\phi) \in \gmod(\R)$. As $\coker(\phi) \in \gmod(\R)$ it follows that $F(X) \in \A(\R)(2)$.

Let $X \in \A(\R)(2)$. Then there exists an exact sequence
\[
0 \rt Y \rt X \rt D \rt 0
\]
where $Y  \in \A(\R)(1)$ and $D \in \gmod(\R)$.
Applying the functors $h_U, h_V$ we get a commutative diagram with exact rows
  \[
  \xymatrix
{
 0
 \ar@{->}[r]
  & h_U(Y)
\ar@{->}[r]
\ar@{->}[d]^{g_Y}
 & h_U(X)
\ar@{->}[r]
\ar@{->}[d]^{g_X}
& D_U
\ar@{->}[r]
\ar@{->}[d]^{\xi}
&0
\\
 0
 \ar@{->}[r]
  &h_V(Y)
\ar@{->}[r]
 & h_V(X)
\ar@{->}[r]
& D_V
    \ar@{->}[r]
    &0
\
 }
\]
Here $D_U, D_V$ are $\R$-submodules of $h_U(D),h_V(D)$ respectively and so are finitely generated. By snake Lemma we have an exact sequence of $\R$-modules
\[
\ker \xi \xrightarrow{\delta}  F(Y) \rt F(X) \rt \coker \xi \rt 0
\]
As $\ker \xi \in \gmod(\R)$ and as proved earlier $F(Y) \in \A(\R)$, it follows from \ref{q-fg} that $\coker \delta \in \A(\R)$. As $\coker \xi \in \gmod(\R)$ it follows from \ref{closed} that $F(X) \in \A(\R)$.
\end{proof}
\section{Quasi-finite modules}
Let $\R = \bigoplus_{n \geq 0}\R_n$ be a standard graded ring with $\R_0 = A$. Let $\R_+ = \bigoplus_{n \geq 1}\R_n$ be the irrelevant ideal of $\R$.  Define
$\Lmod_f(\R)$ to be the full subcategory of graded $\R$-modules $X$ with $X_n$ a finitely generated $A$-module for all $n \in \Z$ and $X_n = 0$ for all $n \ll 0$.
We say $X \in \Lmod_f(\R)$ is \emph{quasi-finite} if $H^0_{\R_+}(X)_n = 0$ for all $n \gg 0$. Clearly finitely generated graded $\R$-modules are quasi-finite. Quasi-finite modules were introduced in \cite{HPV}. They have been extensively investigated in \cite{KP}  in the multi-graded setting. We recall results from \cite{KP} and prove in the singly graded case where more transparent proofs is available.

We first prove
\begin{proposition}\label{mod}
Let $X \in \Lmod_f(\R)$. Then $H^0_{\R_+}(X/H^0_{\R_+}(X)) = 0$. Thus $X/H^0_{\R_+}(X)$ is quasi-finite.
\end{proposition}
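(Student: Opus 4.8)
The plan is to reduce the statement to the classical fact that, for any ideal $\aF$ of a commutative ring and any module $M$, the torsion functor satisfies $\Gamma_{\aF}(M/\Gamma_{\aF}(M)) = 0$, while being careful about the one point where our setting differs from the usual one: here $H^0_{\R_+}(X)$ need \emph{not} be a finitely generated $\R$-module (this is precisely the feature that distinguishes quasi-finite modules from finitely generated ones), so one cannot simply quote the finitely generated case.

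First I would record the elementary observation that $\R_+$ is a \emph{finitely generated} ideal of $\R$. Since $\R$ is standard graded, $\R_+ = \R \cdot \R_1$, and $\R_1 \in \smod(A)$ because $A$ is Noetherian and $\R$ is standard graded; hence $\R_+$ (and every power $\R_+^k$) is a finitely generated ideal. Next I would check that $X/H^0_{\R_+}(X)$ actually lies in $\Lmod_f(\R)$, so that the term ``quasi-finite'' applies to it: writing $N = H^0_{\R_+}(X)$, each $N_n$ is an $A$-submodule of the finitely generated $A$-module $X_n$, hence finitely generated, so $(X/N)_n = X_n/N_n$ is a finitely generated $A$-module and vanishes for $n \ll 0$.

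The core step is the torsion computation. Let $\ov{x} \in H^0_{\R_+}(X/N)$ be homogeneous (it suffices to treat homogeneous elements, since $H^0_{\R_+}(X/N)$ is a graded module), and choose a homogeneous lift $x \in X$. By definition there is $k$ with $\R_+^k \ov{x} = 0$, that is, $\R_+^k x \sub N$. Because $\R_+^k$ is finitely generated, $\R_+^k x$ is a finitely generated $\R$-submodule of $N = H^0_{\R_+}(X)$; choosing finitely many homogeneous generators $b_1 x, \dots, b_r x$ and an $\ell$ with $\R_+^\ell (b_i x) = 0$ for all $i$, we obtain $\R_+^{k+\ell} x = \R_+^\ell (\R_+^k x) = 0$. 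Hence $x \in N$, so $\ov{x} = 0$. Therefore $H^0_{\R_+}(X/N) = 0$; in particular $H^0_{\R_+}(X/N)_n = 0$ for all $n$, and a fortiori for $n \gg 0$, so $X/N$ is quasi-finite.

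I do not expect a genuine obstacle here; the only subtlety, and the single place where any care is needed, is that one may not directly invoke the finitely generated torsion case because $N = H^0_{\R_+}(X)$ can fail to be finitely generated over $\R$. The remedy is exactly the finite generation of the ideal $\R_+$ (and of its powers), which lets the standard ``bounded torsion on a finite generating set'' argument go through unchanged.
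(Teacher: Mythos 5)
Your proof is correct and follows essentially the same route as the paper's: both lift a homogeneous torsion element $\ov{x}$ of the quotient to $x \in X$, observe that $\R_+^k x$ is a finitely generated submodule of $H^0_{\R_+}(X)$ and hence killed by a further power of $\R_+$, and conclude $x \in H^0_{\R_+}(X)$. Your extra remarks (that $\R_+$ is a finitely generated ideal, and that $X/H^0_{\R_+}(X)$ lies in $\Lmod_f(\R)$) only make explicit points the paper leaves implicit.
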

\begin{proof}
Let $E = H^0_{\R_+}(X/H^0_{\R_+}X)$. Let $[x] \in E$ be homogeneous with $x \in X$ homogeneous. Then $\R_{+}^sx \in H^0_{\R_+}(X)$. Notice $\R_{+}^sx$ is a finitely generated $\R$-submodule of
$H^0_{\R_+}(X)$. It follows that $\R_{+}^l(\R_{+}^sx) = 0$. So $\R_+^{l+s}x = 0$. Thus $x \in H^0_{\R_+}(X)$. So $[x] = 0$. Thus $E = 0$.
The result follows.
\end{proof}
Next we show
\begin{proposition}
\label{ass-qf}
  Let $X \in \Lmod_f(\R)$ be a quasi-finite $\R$-module. Then \\ $\Ass_A X_n \subseteq \Ass_A X_{n+1}$ for all $n \gg 0$.
\end{proposition}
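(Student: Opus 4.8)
The plan is to reduce to the graded--module case via a standard trick: if $X$ is quasi-finite, then $Y := X/H^0_{\R_+}(X)$ agrees with $X$ in all large degrees, so it suffices to prove the inclusion for a quasi-finite module $Y$ with $H^0_{\R_+}(Y) = 0$; by Proposition~\ref{mod} we may as well assume $H^0_{\R_+}(X) = 0$ from the start. The point of this reduction is that $\depth_{\R_+} X > 0$ means there is a homogeneous element of positive degree that is a nonzerodivisor on $X$ --- or more precisely, after passing to a suitable Veronese or adjoining a variable to handle residue-field issues, we can find such an element. Actually, the cleaner route avoids needing an honest nonzerodivisor: I would work directly with $\Ass$.

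The core step: fix $\mathfrak{p} \in \Ass_A X_n$ for $n \gg 0$; we want $\mathfrak{p} \in \Ass_A X_{n+1}$. Localize at $\mathfrak{p}$, so assume $(A,\mathfrak{p})$ is local and $\mathfrak{p} \in \Ass_A X_n$, i.e. $H^0_{\mathfrak{p}}(X_n) \neq 0$, equivalently $\grade(\mathfrak{p}, X_n) = 0$. The key is to relate multiplication $\R_1 \otimes X_n \to X_{n+1}$ to the degree-shift structure. Choose $0 \neq x \in X_n$ with $\mathfrak{p} x = 0$. Since $H^0_{\R_+}(X) = 0$, $x$ is not killed by $\R_+$, so there is a variable $t \in \R_1$ (in the localized/standard-graded sense, after possibly enlarging the residue field) with $tx \neq 0$ in $X_{n+1}$; and $\mathfrak{p}(tx) = t(\mathfrak{p} x) = 0$ since $\R_1$ and $A$-multiplication commute. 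Hence $\mathfrak{p} \in \Ass_A X_{n+1}$. The residue-field subtlety (a nonzerodivisor of $\R_+$ on $X$ may only exist after a faithfully flat base change $A \to A[U]_{\mathfrak{p}[U]}$) is handled by the usual observation that $\Ass$ and such grade/depth conditions behave well under $A \to A[U]$ and that $(A[U]/\mathfrak{p} A[U])$-points pull back; alternatively one invokes \cite{KP} directly, since the statement is quoted from there.

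The main obstacle I anticipate is precisely guaranteeing the existence of the homogeneous degree-one element $t$ with $tx \neq 0$ when $A$ has small residue fields: $H^0_{\R_+}(X)_{n+1} = 0$ does \emph{not} immediately give, for a \emph{specific} $x \in X_n$, an element of $\R_1$ not annihilating it --- it gives that $\R_+^s x \neq 0$ for all $s$, hence $\R_{n+1}\cdots$ eventually nonzero, but we need it in degree exactly $n+1$. This is where quasi-finiteness must be used more carefully: from $H^0_{\R_+}(X) = 0$ one gets $\grade(\R_+, X) \geq 1$, so $\R_+$ contains a nonzerodivisor on $X$ after the base change $A \rightsquigarrow A[U]$, and this nonzerodivisor can be taken homogeneous of degree $1$ by standard prime-avoidance in the graded setting (Ie $\R_+ \not\subseteq \bigcup \mathfrak{q}$ over $\mathfrak{q} \in \Ass X$, each of which, being graded, misses some degree-one element once the residue field is infinite). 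Multiplication by that element gives an injection $X_n \hookrightarrow X_{n+1}$ of $A$-modules for all $n$, whence $\Ass_A X_n \subseteq \Ass_A X_{n+1}$, and one descends along $A[U] \to A$. I would present this cleanly by first recording the lemma that a quasi-finite module $X$ with $H^0_{\R_+}(X)=0$ admits, after base change to $A[U]$, a degree-one nonzerodivisor in $\R_+[U]$, and then the proposition is immediate.
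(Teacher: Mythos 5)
Your reduction to the case $H^0_{\R_+}(X)=0$ via Proposition~\ref{mod} is exactly the paper's first step. But the argument you ultimately commit to --- produce, after the base change $A\to A[U]_{\mathfrak p[U]}$, a single degree-one element of $\R_+$ that is a nonzerodivisor on all of $X$, by prime avoidance over $\Ass_\R X$ --- has a genuine gap. Here $X$ is \emph{not} finitely generated over $\R$: $\Ass_\R X$ need not be finite (it is in general only a countable union of finite sets), the equivalence ``$H^0_{\R_+}(X)=0 \Rightarrow \R_+$ contains a nonzerodivisor on $X$'' is a statement about finitely generated modules, and prime avoidance against infinitely many primes can fail even over an infinite residue field (a vector space over a countably infinite field can be a countable union of proper subspaces; compare $X=\R/(t)\oplus\bigoplus_n(\R/(s-\lambda_n t))(-n)$ over $\R=k[s,t]$ with $\{\lambda_n\}=k$, where $H^0_{\R_+}(X)=0$ yet every element of $\R_1$ is a zerodivisor on $X$). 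So the ``clean lemma'' you propose to record is unjustified, and in this form false.

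Ironically, the \emph{first} argument you sketch and then abandon is the one that works, and the obstacle you raise against it is not real: since $\R$ is standard graded, $\R_+=\R\cdot\R_1$, so for a homogeneous $x$ one has $\R_+x=0$ iff $\R_1x=0$; hence $H^0_{\R_+}(X)=0$ and $x\neq 0$ already give some $t\in\R_1$ with $tx\neq 0$ \emph{in degree exactly $n+1$}, with no base change and no residue-field issue. (After localizing at $\mathfrak p$ you also need the one-line remark that $\mathfrak p$ is now maximal, so $\mathfrak p\subseteq\ann(tx)\subsetneq A_{\mathfrak p}$ forces equality.) The paper packages this even more economically and avoids localization altogether: writing $\R_+=(r_1,\dots,r_s)$ with $r_i\in\R_1$, the graded map $Y(-1)\to Y^s$, $y\mapsto(r_1y,\dots,r_sy)$, has kernel $(0:_Y\R_+)\subseteq H^0_{\R_+}(Y)=0$, so it embeds $Y_n$ into $Y_{n+1}^s$ as $A$-modules, and $\Ass_A Y_n\subseteq\Ass_A(Y_{n+1}^s)=\Ass_A Y_{n+1}$. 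I recommend you either adopt that map or keep your element-wise $\Ass$ argument with the correction above, and drop the nonzerodivisor/base-change route entirely.
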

\begin{proof}
Set $Y = X/H^0_{\R_+}(X)$. Then by \ref{mod} it follows that $H^0_{\R_+}(Y) = 0$. As $X$ is quasi-finite it follows that $Y_n = X_n$ for $n \gg 0$.
Let $\R_+ = (r_1, \ldots, r_s)$ where $r_i \in \R_1$. We have an obvious map
\begin{align*}
  \phi \colon Y(-1)& \rt Y^s  \\
  y &\mapsto (r_1y, \cdots, r_sy).
\end{align*}
Notice if $y \in \ker \phi$ then $y \in (0 \colon_Y \R_+) \subseteq H^0_{\R_+}(Y) = 0$. So $\ker \phi = 0$. As $\phi$ is graded we have an inclusion $Y_{n} \subseteq Y_{n+1}^s$ of $A$-modules.
Therefore $\Ass_A Y_n \subseteq \Ass_A Y_{n+1}$ for all $n$. The result follows.
\end{proof}
\s Let $X \in \Lmod_f(\R)$. Then we say $ X$ has \emph{stable associate primes} if there exists $n_0$ with $\Ass_A X_n = \Ass_A X_{n_0}$ for all
$n \geq n_0$. Next we show
\begin{proposition}\label{exact}
  Let $X \in \Lmod_f(\R)$. Set $Y = X/H^0_{\R_+}(X)$. Then $\Ass_A X = \Ass_A Y \cup \Ass_A H^0_{\R_+}(X)$. We have if $P \in \Ass_A Y_s$ then $P \in \Ass_A X_{s+r}$ for some $r \geq 1$.
\end{proposition}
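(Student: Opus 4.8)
The plan is to prove the second assertion first and deduce the displayed equality from it. Viewing $X = \bigoplus_n X_n$ and $Y = \bigoplus_n Y_n$ as $A$-modules, we have $\Ass_A X = \bigcup_n \Ass_A X_n$ and $\Ass_A Y = \bigcup_n \Ass_A Y_n$. From the short exact sequence $0 \to H^0_{\R_+}(X) \to X \to Y \to 0$ of $A$-modules and the standard behaviour of associated primes in a short exact sequence, $\Ass_A H^0_{\R_+}(X) \subseteq \Ass_A X$ and $\Ass_A X \subseteq \Ass_A H^0_{\R_+}(X) \cup \Ass_A Y$, so the displayed equality reduces to the inclusion $\Ass_A Y \subseteq \Ass_A X$ --- which, in view of the decompositions above, is precisely the second assertion. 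Hence everything comes down to showing: if $P \in \Ass_A Y_s$ then $P \in \Ass_A X_{s+r}$ for some $r \geq 1$.

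To prove this, fix $P \in \Ass_A Y_s$ and choose $\ov{y} \in Y_s$ with $\ann_A \ov{y} = P$; lift $\ov{y}$ to $y \in X_s$. I would then work inside the cyclic graded $\R$-submodule $W := \R y$ of $X$: set $H := H^0_{\R_+}(W) = W \cap H^0_{\R_+}(X)$ and $Z := W/H$, which the quotient map identifies with the graded $\R$-submodule $\R\,\ov{y}$ of $Y$. The crucial gain is that $W$ is finitely generated over $\R$. The argument then proceeds through four steps: (i) $Z_s = A\,\ov{y} \cong A/P$, so $P \in \Ass_A Z_s$; (ii) $H^0_{\R_+}(Z) = 0$ by Proposition \ref{mod} applied to $W$; (iii) $H$, being a submodule of the Noetherian $\R$-module $W$, is finitely generated over $\R$ and is killed by $\R_+^{\,c}$ for some $c$, hence is a finitely generated module over $\R/\R_+^{\,c} = \bigoplus_{j=0}^{c-1}\R_j$, which is module-finite over $A$; so $H$ is a finitely generated $A$-module, and being graded it satisfies $H_n = 0$ for $n \gg 0$, whence $W_n \cong Z_n$ for $n \gg 0$; (iv) applying the argument from the proof of Proposition \ref{ass-qf} to $Z$ (using (ii) and writing $\R_+ = (r_1,\dots,r_t)$ with $r_i \in \R_1$), the map $z \mapsto (r_1 z,\dots,r_t z)$ embeds $Z_n$ into $Z_{n+1}^{\,t}$ for every $n$, so $\Ass_A Z_n \subseteq \Ass_A Z_{n+1}$ for all $n$, and with (i) this forces $P \in \Ass_A Z_n$ for all $n \geq s$. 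Now pick $n \geq s+1$ large enough that $H_n = 0$; then $P \in \Ass_A Z_n = \Ass_A W_n \subseteq \Ass_A X_n$, which is the claim with $r = n - s$.

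I expect the one delicate point to be the lifting step: a witness $\ov{y}$ for $P \in \Ass_A Y_s$ need not lift to an element of $X_s$ with annihilator exactly $P$, because $H^0_{\R_+}(X)$ can impose extra relations in low degrees. The device of replacing $X$ by the cyclic submodule $W = \R y$ isolates this difficulty in the finitely generated torsion module $H = H^0_{\R_+}(W)$, which vanishes in all high degrees; there $W$ coincides with its $\R_+$-torsion-free quotient $Z$, and the monotonicity of $\Ass_A Z_n$ transports $P$ back up into $\Ass_A X_n$. Granting the second assertion, the displayed equality follows from the two inclusions recorded in the first paragraph.
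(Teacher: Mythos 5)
Your argument is correct, but it takes a noticeably different route from the paper's, so a comparison is worthwhile. The paper first localizes at $P$, reducing to the case of a local ring $(A,\m)$ with $\m = (0 : [y])$ for some $[y] \in Y_s$; then it observes that $\m y$ is a finitely generated submodule of $H^0_{\R_+}(X)$, hence $\R_+^{\,l}\m y = 0$ for some $l$, while $(\R_+)^l y \neq 0$ because $[y] \neq 0$ in $Y$; any nonzero homogeneous $u \in (\R_+)^l y$ is then killed by $\m$, giving $\m \in \Ass_A X_{s+r}$ directly. That is a three-line element computation. You avoid localization entirely by passing to the cyclic submodule $W = \R y$, isolating its $\R_+$-torsion $H$ (which you correctly show is concentrated in finitely many degrees), and then invoking Proposition \ref{mod} and the monotonicity mechanism of Proposition \ref{ass-qf} on the quasi-finite quotient $Z = \R\,\ov{y}$ to march $P$ from $\Ass_A Z_s$ up to $\Ass_A Z_n = \Ass_A W_n \subseteq \Ass_A X_n$ for large $n$. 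The underlying engine is the same in both proofs --- multiply the witness by enough of $\R_+$ to escape the $\R_+$-torsion while preserving the annihilator --- but your version is more structural and reuses the earlier lemmas, at the cost of length, whereas the paper's localization trick collapses the whole thing to a single explicit element $u$. Your reduction of the displayed equality to the second assertion, via $\Ass_A X = \bigcup_n \Ass_A X_n$ and the standard two inclusions for a short exact sequence, matches what the paper leaves implicit. All the individual steps you flag as delicate (the lifting issue, the finiteness of $H$ over $A$, the identification $Z \cong \R\,\ov{y}$) are handled correctly.
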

\begin{proof}
Clearly we have
\[
\Ass_A H^0_{\R_+}(X) \subseteq \Ass_A  X \subseteq \Ass_A Y \cup \Ass_A H^0_{\R_+}(X).
\]
Let $P \in \Ass_A Y$. We localize at $P$ and so we may assume $A$ is local with maximal ideal $\m$ and $\m \in \Ass_A Y$.
Suppose $\m = (0 \colon [y]) $ with $[y] \in Y_s$. So $\m y \in H^0_{\R_+}(X)$. Thus ${\R_+}^l\m y = 0$ for some $l \geq 1$. As $[y] \neq 0$ in $Y$ it follows that $(\R_+)^ly \neq 0$. Let
$u \in (\R_+)^ly$ be non-zero and homogeneous. Then $\m u = 0$. So $\m \in \Ass_A X_{s+ r}$ for some $r \geq l$.
\end{proof}
The final result in this section is
\begin{theorem}\label{ass-stable}
  Let $X \in \Lmod_f(\R)$. Assume $\Ass_A X$ is finite. If $\Ass_A H^0_{\R_+}(X)_n$ is stable for $n \gg 0$ then $\Ass_A X_n$ is stable for $n \gg 0$.
\end{theorem}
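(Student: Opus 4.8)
The plan is to prove the formally stronger assertion that $\Ass_A X_n \subseteq \Ass_A X_{n+1}$ for all $n \gg 0$. Granting this, note that $X = \bigoplus_{m} X_m$ as an $A$-module, so $\Ass_A X = \bigcup_m \Ass_A X_m$, which is finite by hypothesis; an ascending chain of subsets of a finite set is eventually constant, hence $\Ass_A X_n$ is stable for $n \gg 0$. Thus everything reduces to comparing consecutive graded pieces of $X$.

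To compare $X_n$ with $X_{n+1}$ I would reuse the device from the proof of Proposition~\ref{ass-qf}, but applied to $X$ itself rather than to $X/H^0_{\R_+}(X)$. Write $H = H^0_{\R_+}(X)$ and pick $r_1, \dots, r_s \in \R_1$ generating the ideal $\R_+$ (possible since $A$ is Noetherian and $\R$ is standard graded, so $\R_1$ is a finite $A$-module and generates $\R_+$). For each $n$ let $\Phi_n \colon X_n \to X_{n+1}^{\,s}$ be the $A$-linear map $x \mapsto (r_1 x, \dots, r_s x)$. The key point is that $\ker \Phi_n \subseteq H_n$: if $\R_1 x = 0$ then $\R_+ x = 0$, so $x \in (0 \colon_X \R_+) \subseteq H$. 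Consequently $X_n/\ker\Phi_n \cong \image \Phi_n$ is an $A$-submodule of $X_{n+1}^{\,s}$, whence $\Ass_A(X_n/\ker\Phi_n) \subseteq \Ass_A X_{n+1}$, while $\ker\Phi_n \subseteq H_n$ gives $\Ass_A(\ker\Phi_n) \subseteq \Ass_A H_n$. Feeding these into the elementary inclusion $\Ass_A X_n \subseteq \Ass_A(\ker\Phi_n) \cup \Ass_A(X_n/\ker\Phi_n)$ attached to $0 \to \ker\Phi_n \to X_n \to X_n/\ker\Phi_n \to 0$ yields $\Ass_A X_n \subseteq \Ass_A H_n \cup \Ass_A X_{n+1}$ for every $n$.

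Now the hypothesis enters. Choose $N_0$ with $\Ass_A H_n = \Ass_A H_{N_0} =: B$ for all $n \geq N_0$. For $n \geq N_0$ we then have $\Ass_A X_n \subseteq B \cup \Ass_A X_{n+1}$; but $H_{n+1}$ is an $A$-submodule of $X_{n+1}$, so $B = \Ass_A H_{n+1} \subseteq \Ass_A X_{n+1}$, making $B$ redundant, and therefore $\Ass_A X_n \subseteq \Ass_A X_{n+1}$ for all $n \geq N_0$. That is the monotonicity needed, and combined with the first paragraph this finishes the proof.

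I do not expect a real obstacle; the one decision that matters is \emph{not} to follow the naive route of passing to $Y = X/H^0_{\R_+}(X)$. That route (via Propositions~\ref{ass-qf} and~\ref{exact}) shows $\Ass_A Y_n$ is eventually constant and so pins $\Ass_A X_n$ down up to a subset of $\Ass_A Y_n \setminus \Ass_A H_n$, but Proposition~\ref{exact} only returns each such prime to $\Ass_A X_m$ for a single $m$, not for all large $m$, leaving one stuck without a monotonicity input; the map $\Phi_n$ supplies exactly that input directly on $X$. Beyond this, the remaining steps are routine: finiteness of $\R_1$ over $A$, and the standard behaviour of $\Ass_A$ under submodules, quotients, the short exact sequence inclusion, and (arbitrary) direct sums.
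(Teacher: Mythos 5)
Your proof is correct, and it takes a genuinely different and more economical route than the paper. The paper first passes to $Y = X/H^0_{\R_+}(X)$, uses quasi-finiteness (Propositions \ref{mod} and \ref{ass-qf}) to stabilize $\Ass_A Y_n$, and then, for a prime $Q \in \Ass_A X_n$ not lying in the stable set $\Ass_A H^0_{\R_+}(X)_n$, localizes at $Q$ and runs a commutative diagram of multiplication maps through $\Hom_A(k,-)$, using injectivity of $\phi_Y$ to force $Q \in \Ass_A X_{n+1}$. You instead apply the multiplication map $\Phi_n$ directly to $X$ and observe that its kernel is contained in $H^0_{\R_+}(X)_n$; this yields the unconditional inclusion $\Ass_A X_n \subseteq \Ass_A H^0_{\R_+}(X)_n \cup \Ass_A X_{n+1}$, and the stability hypothesis on $H^0_{\R_+}(X)$ together with $\Ass_A H^0_{\R_+}(X)_{n+1} \subseteq \Ass_A X_{n+1}$ absorbs the first term. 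Both arguments then conclude identically via the ascending chain of subsets of the finite set $\Ass_A X$. Your version avoids localization, the $\Hom_A(k,-)$ step, and the passage to $Y$ altogether, at no cost in generality; the paper's heavier machinery for this theorem is not needed, though Proposition \ref{ass-qf} is still used independently elsewhere (e.g.\ in the case $i=0$ of Theorem \ref{main}). One minor caveat: your closing remark slightly mischaracterizes the paper's argument as the ``naive route'' through Proposition \ref{exact} alone --- the paper does pass through $Y$ but supplements it with the diagram argument precisely to supply the monotonicity you identify as missing.
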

\begin{proof}
  Set $Y = X/H^0_{\R_+}(X)$. By \ref{exact} it follows that $\Ass_A Y$ is finite. As $Y$ is quasi-finite (see \ref{mod}) it follows that $\Ass_A Y_n$ is stable, see \ref{ass-qf}.
  Assume that for $n \geq n_0$ we have $\Ass_A H^0_{\R_+}(X) = \{ P_1, \ldots, P_r \}$ and $\Ass_A Y_n = \{ Q_1, \ldots, Q_s \}$. We prove that for $n \geq n_0$ we have $\Ass_A X_n \subseteq \Ass_A X_{n +1}$. As $\Ass_A X$ is finite the result follows. Note $P_i \in \Ass_A X_n$ for all $n \geq n_0$. Suppose $Q \in \Ass_A X_n$ for some $n \geq n_0$ with $Q \neq P_i$ for all $i$. Note $Q \in \Ass_A Y_n$.
  We localize at $Q$.
  So we may assume $(A,\m)$ is local, $\m \in \Ass_A X_n $ but $\m \notin \Ass_A H^0_{\R_+}(X)_n$.
   Let $\R_+ = (r_1, \ldots, r_l)$ where $r_i \in \R_1$. If $E$ is a graded
  $\R$-module we have a map
\begin{align*}
  \phi_E \colon E(-1)& \rt E^l \\
  e &\mapsto (r_1e, \cdots, r_le).
\end{align*}
Set $D = H^0_{\R_+}(X)$.  We have an exact sequence $0 \rt D  \rt X \rt Y \rt 0$. We take its $l$-fold direct sum $0 \rt D^l \rt X^l  \rt Y^l \rt 0$.
We have a commutative diagram:
\[
  \xymatrix
{
 0
 \ar@{->}[r]
  & D
\ar@{->}[r]
\ar@{->}[d]^{\phi_D}
 & X
\ar@{->}[r]
\ar@{->}[d]^{\phi_X}
& Y
\ar@{->}[r]
\ar@{->}[d]^{\phi_Y}
&0
\\
 0
 \ar@{->}[r]
  &D^l
\ar@{->}[r]
 & X^l
\ar@{->}[r]
& Y^l
    \ar@{->}[r]
    &0
\
 }
\]
By proof of \ref{ass-qf} we get that $\phi_Y$ is injective. Taking $\Hom_A(k, -)$ and noting that $\m \notin \Ass_A D_j$ for all $j \geq n_0$ we obtain a commutative diagram
\[
  \xymatrix
{
 0
 \ar@{->}[r]
 & \Hom_A(k, X_n)
\ar@{->}[r]
\ar@{->}[d]^{\phi^*_X}
& \Hom_A(k, Y_n)
\ar@{->}[d]^{\phi^*_Y}
\\
 0
 \ar@{->}[r]
 & \Hom_A(k, X_{n+1}^l)
\ar@{->}[r]
& \Hom_A(k, Y_{n+1}^l)
\
 }
\]
As $\phi^*_Y$ is injective it follows that $\phi^*_X$ is also injective. Thus $\Hom_A(k, X_{n+1}) \neq 0$. So $\m \in \Ass_A X_{n+1}$. The result follows.
\end{proof}

\section{An analogue of a result by Amao}
\s \label{setup-amao}
Let $A$ be a Noetherian ring.
Let $\R$ and $\Sc$ be standard graded $A$-algebras wtih an inclusion of graded rings $\R \subseteq \Sc$ and $\R_0 = \Sc_0  = A$.
Let $M$ be a finitely generated graded $\R$-module and let $N$ be a finitely generated $\Sc$-module. Assume we have a inclusion $M \subseteq N$ of graded $\R$-modules.
In this section we prove the following:
\begin{theorem}\label{amao}
(with hypotheses as in \ref{setup-amao} The following results hold:
\begin{enumerate}[\rm (1)]
  \item $\Ass_A N/M$ is finite.
  \item If $A$ is local and $\ell(N_n/M_n) < \infty$ for all $n$ then the function $n \rt \ell(N_n/M_n)$ is of polynomial type and of degree $\leq \dim \Sc - 1$.
\end{enumerate}
\end{theorem}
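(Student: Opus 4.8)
The plan is to deduce both parts from a single construction that turns the (generally not finitely generated) module $N/M$ into a finitely generated module over a standard graded $A$-algebra. First I split off the harmless part. Let $L = \Sc M$ be the $\Sc$-submodule of $N$ generated by $M$, a finitely generated graded $\Sc$-module, and consider the exact sequence of graded $\R$-modules $0 \rt L/M \rt N/M \rt N/L \rt 0$. Since $N/L$ is finitely generated over $\Sc$, $\Ass_A(N/L) = \{\, P\cap A : P \in \Ass_\Sc(N/L)\,\}$ is finite; and if $A$ is local with $\ell_A((N/M)_n) < \infty$ for all $n$, then $\ell_A((N/L)_n) \le \ell_A((N/M)_n) < \infty$, so $\operatorname{Supp}_\Sc(N/L) \subseteq V(\m\Sc)$, hence $\m^s(N/L) = 0$ for some $s$, and Hilbert--Serre over the Artinian ring $A/\m^s$ shows $\ell_A((N/L)_n)$ agrees with a polynomial of degree $\le \dim\Sc - 1$ for $n \gg 0$. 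So it suffices to prove the two assertions for $L/M$.

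For $L/M$, write $\Sc_{\le k} = \bigoplus_{j=0}^{k}\Sc_j$ and let $E_k$ be the $\R$-submodule of $\Sc$ generated by $\Sc_{\le k}$; since $\Sc$ is standard graded, $E_0 = \R$ and $E_k = E_1^k$. Introduce an indeterminate $u$ and set $\B = \bigoplus_{k\ge 0} E_k u^k \subseteq \Sc[u]$: a bigraded ring (original grading with $\deg u = 0$, and $u$-grading), finitely generated as an $A$-algebra, and standard graded over $\B_{u=0} = \R$ in the $u$-grading. Put $\Delta_k = E_k M \subseteq N$, an $\R$-submodule with $\Delta_0 = M$ and $\bigcup_k \Delta_k = \Sc M = L$; then $\mathcal{E} := \bigoplus_k \Delta_k u^k = \B M$ is a finitely generated graded $\B$-module. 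As $u$ is a nonzerodivisor on $\B$, $\mathcal{G} := \B/u\B$ is Noetherian and $\mathcal{E}/u\mathcal{E}$ is a finitely generated graded $\mathcal{G}$-module, with $u$-degree-$k$ part $\Delta_k/\Delta_{k-1}$ for $k \ge 1$. Let $T \subseteq \mathcal{E}/u\mathcal{E}$ be the part of positive $u$-degree, a finitely generated graded $\mathcal{G}$-module; its original-degree-$n$ component $T_n = \bigoplus_{k\ge 1}(\Delta_k/\Delta_{k-1})_n$ telescopes to give $\ell_A(T_n) = \ell_A((L/M)_n)$ (the sum is finite because $(\Delta_K)_n = L_n$ once $K$ exceeds $n$ minus the least degree of $M$), and the filtration $\{\Delta_k\}$ gives $\Ass_A((L/M)_n) \subseteq \Ass_A(T_n)$.

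Finally I view $\mathcal{G}$ with the original grading. One checks it is a standard graded $A$-algebra with $\mathcal{G}_0 = A$: the degree-$0$ part of each $E_k$ is $A$, which kills the positive $u$-degree of $\mathcal{G}$ in degree $0$, and $\Sc$ standard forces $\mathcal{G}$ to be generated in degree $1$. Then McAdam--Eakin \cite{ME} applied to the finitely generated graded $\mathcal{G}$-module $T$ shows $\Ass_A(T_n)$ is stable, hence $\Ass_A(T)$, and therefore $\Ass_A(L/M)$, is finite; with the first paragraph this gives (1). If moreover $\ell_A((L/M)_n) = \ell_A(T_n) < \infty$ for all $n$, the classical argument over $\mathcal{G}$ (reduce to $\mathcal{G}/\m^s\mathcal{G}$ and apply Hilbert--Serre) gives that $\ell_A((L/M)_n)$ is of polynomial type of degree $\le \dim\mathcal{G} - 1$, and with the first paragraph this gives (2). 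The step I expect to be the main obstacle is the dimension estimate $\dim\mathcal{G} \le \dim\Sc$: it should follow from $\B[u^{-1}] = \Sc[u, u^{-1}]$ together with $u$ being a nonzerodivisor (so that $\B$ is a Rees algebra degenerating $\Sc$ to $\mathcal{G}$, whence $\dim\B = \dim\Sc + 1$ and $\dim\mathcal{G} = \dim\B - 1$), but the bigraded bookkeeping making this precise is where the care lies.
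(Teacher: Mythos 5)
Your proposal is correct in outline and takes a genuinely different route from the paper, so let me compare. The paper makes no preliminary splitting: it first replaces $M$, $N$ by truncations generated in degree $0$, passes to the trivial extensions $\wt{\R}=\R\ltimes M[-1]\subseteq\wt{\Sc}=\Sc\ltimes N[-1]$ so that the module problem becomes a ring problem, and then studies an ideal $L^*$ inside the associated graded ring $G=\text{gr}_{\wt{\R}_+\wt{\Sc}}(\wt{\Sc})$ whose degree-$j$ piece filters $N_{j-1}/M_{j-1}$ from $M_{j-1}$ all the way up to $N_{j-1}$. You instead split off $N/\Sc M$ (handled directly as a finitely generated $\Sc$-module) and build a Rees module $\mathcal{E}=\B M$ over the filtration Rees algebra $\B=\bigoplus_k E_k u^k$, taking associated graded to capture $\Sc M/M$. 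This is a legitimate and arguably cleaner variant: it avoids the trivial-extension bookkeeping, and your exact sequence $0\rt \Sc M/M\rt N/M\rt N/\Sc M\rt 0$ isolates the genuinely non-finitely-generated part. It is worth noting that your $\mathcal{G}$ is not really a new object: since $\Sc$ is standard graded one computes $(E_{k-1})_n=\Sc_{k-1}\R_{n-k+1}=(\R_+\Sc)^{\,n-k+1}\cap\Sc_n$, so your filtration of each $\Sc_n$ is the $\R_+\Sc$-adic filtration read backwards and $\mathcal{G}\cong\text{gr}_{\R_+\Sc}(\Sc)$ — the same ring the paper (following Amao) uses, reached without the trivial extension. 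Your telescoping/filtration argument giving $\ell_A(T_n)=\ell_A((L/M)_n)$ and $\Ass_A((L/M)_n)\subseteq\Ass_A(T_n)$ is exactly the mechanism the paper uses for $L^*_j$, and finiteness of $\Ass_A T$ follows from the paper's Proposition \ref{ex-m} (you do not need the full strength of McAdam--Eakin).

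The one step where your justification as written does not go through is the dimension bound, and you correctly flag it. The implication ``$u$ a nonzerodivisor $\Rightarrow\dim\mathcal{G}\le\dim\B-1$'' is fine (no minimal prime of $\B$ contains $u$), but the companion claim $\dim\B\le\dim\Sc+1$ is not a formal consequence of $\B[u^{-1}]=\Sc[u,u^{-1}]$: that identity only controls the primes of $\B$ not containing $u$, and $\B$ is the Rees algebra of the filtration $\{E_k\}$ by $\R$-submodules rather than of an ideal, so the standard theorem $\dim R[It]=\dim R+1$ does not apply verbatim (nor is $\Sc[u]$ integral over $\B$, as the example $\R=A$, $\Sc=A[x]$, $\B=A[u,xu]$ shows). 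The clean repair is the identification above: once $\mathcal{G}\cong\text{gr}_{\R_+\Sc}(\Sc)$, the bound $\dim\mathcal{G}\le\dim\Sc$ is exactly the citation \cite[4.5.6]{BH} the paper itself uses (applied in the graded-local situation relevant to part (2), where $A$ is local and $\Sc$ is standard graded over it). With that substitution, your argument gives degree $\le\dim\mathcal{G}-1\le\dim\Sc-1$ for $\ell_A(T_n)$ and degree $\le\dim\Sc-1$ for $\ell_A((N/\Sc M)_n)$, hence the stated bound for the sum.
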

A multigraded version of part(1) of the above result was proved in \cite{HF}. The proof of our singly graded version is considerably simpler and also gives (2).

\s \label{const} \emph{Construction}: One of the chief problems in trying to prove the above theorem is that $N/M$ is not a
$\Sc$-module. It is also, in general, not a finitely generated $\R$-module. We generalize Amao's technique, \cite{Am}, in the case $M = \R$ and $N = \Sc$,
is to consider the Rees ring $R(\R_+\Sc, \Sc)$. We generalize this technique as follows:

Let $M_{\geq r} = \bigoplus_{n \geq r} M_n$. Then for $r \gg 0$ we have that $M_{\geq r} $ is generated in degree $r$ as a $\R$-module.
Similarly $N_{\geq r}$ is generated in degree $r$ as a $\Sc$-module (for $r \gg 0$).  We choose $r \gg 0$ such that both hold. Clearly we may replace $M, N$ by
$M_{\geq r}$ and $N_{\geq r}$ respectively. We then shift the modules by $r$. Thus we may assume $M = \bigoplus_{n \geq 0}M_n$ and $N = \bigoplus_{n \geq 0} N_n$ and that $M$( $N$) is generated by
$M_0$($N_0$) as a $\R$($\Sc$)-module.

Consider the graded trivial extension $\wt{\R} = \R \ltimes M[-1]$ with $\wt{\R}_n = (\R_{n}, M_{n-1})$. It is readily checked that $\wt{\R}$ is standard graded with $\wt{\R}_0 = A$.
Similarly we consider the trivial extension $\wt{\Sc} = \Sc \ltimes N[-1]$. Note we have an inclusion of standard graded rings $\wt{\R} \subseteq \wt{\Sc} $ with $\wt{\R}_0  = \wt{\Sc}_0 = A$.

Next we consider $T = R(\wt{\R}_+ \wt{\Sc}, \wt{\Sc})$ the Rees algebra of $\wt{\R}_+ \wt{\Sc}$ in $\wt{\Sc}$. We note that $T$ is canonically bi-graded with $T_{ij} = \wt{\R}_i \wt{\Sc}_{j-i}$.
It is easy to check that $T_{i +1,j} \subseteq T_{ij}$. We note that $T_{ij}  =0$ if $j < i$. Consider the associated graded ring $G = \text{gr}_{\wt{\R}_1\wt{\Sc}}\Sc$. We note that $G_{ij} = T_{ij}/T_{i+1,j}$.
We now give:
\begin{proof}[Proof of Theorem \ref{amao}] We make the construction as in \ref{const}. First consider the ideal $L = \bigoplus_{j > i}G_{ij}$ of $G$. Next consider the ideal $L^*$ of $G$ consisting of second components  of $L$.

We now consider $T$, $G$ and $L^*$ singly graded by summing across columns, ie., $T = \bigoplus_{j \geq 0}T_j$ where $T_j = \bigoplus_{i \geq 0}T_{ij} = \bigoplus_{i = 0}^{j} T_{ij}$.
Similarly $G = \bigoplus_{j \geq 0}G_j$. We note that with this grading $T$ (and hence $G$) is standard graded.

We have $L^* = \bigoplus_{j \geq 0}L_j^*$.
Also $L_j^* = \bigoplus_{i = 0}^jL^*_{ij}$. We have
\[
L^*_{ij} = \left(0,  \frac{\R_iN_{j-i-1} + \Sc_{j-i}M_{i-1}}{ \R_{i+1}N_{j-i-2} + \Sc_{j-i-1}M_i } \right).
\]
We note that
\[
L^*_{0j} = \left(0,  \frac{N_{j-1} }{ \R_{1}N_{j-2} + \Sc_{j-1}M_0} \right) \quad \text{and} \ L^*_{j-1, j} = \left(0,  \frac{\R_{j-1}N_{0} + \Sc_{1}M_{j-2}}{ M_{j-1} } \right).
\]
We observe that the second components of $L^*_{j}$ give a filtration of $N_{j-1}/M_{j-1}$, where
\begin{align*}
  M_{j-1} &\subseteq R_{j-1}N_0 + \Sc_1 M_{j-2} \\
   &\subseteq \R_{j-2}N_1 + \Sc_2M_{j-3} \\
   &\subseteq \cdots \\
   &\subseteq \R_{j-i}N_{i-1} + \Sc_{i} M_{j - i - 1}\\
   &\subseteq \cdots \\
   &\subseteq \R_1N_{j-2} + \Sc_{j-1}M_0 \\
   &\subseteq N_{j-1}.
\end{align*}
It follows that $\Ass_A N_{j-1}/M_{j-1} \subseteq \Ass_A L^*_j$ and if $\ell_A(N_{j-1}/M_{j-1})$ is finite then $\ell_A(L_j^*) = \ell(N_{j-1}/M_{j-1})$.

As $G$ is a Noetherian ring and $L^*$ is an ideal of $G$ it follows from \ref{ex-m}, that $\Ass_A L^*$ is a finite set. So $\Ass_A N/M$ is also finite set.
Furthermore if $N_{j-1}/M_{j-1}$ has finite length for all $j$ then so does $L_j^*$. It follows that the function $j \rt \ell(L_j^*)$ is of polynomial type of degree at most $\dim G$. By
\cite[4.5.6]{BH} we have
$\dim G = \dim \wt{\Sc}$. But clearly $\dim \wt{\Sc} = \dim \Sc$ (this is Exercise 3.3.22 in \cite{BH}). The result follows.
\end{proof}
\section{Main Theorem}
In this section we prove Theorem \ref{main}. We restate it here for the convenience of the reader.
\begin{theorem}\label{main-body}
Let $A$ be a Noetherian ring and let $\R = \bigoplus_{n \geq 0}\R_n$ be a standard graded ring with $\R_0 = A$. Let $X  = \bigoplus_{n \in \Z} X_n\in \A(\R)$. Then
\begin{enumerate}[\rm (1)]
  \item  $X_i$ is finitely generated $A$-module for all $i \in \Z$ and $X_i = 0$ for $i \ll 0$.
  \item There exists $n_0$ such that $\Ass_A X_n = \Ass_A X_{n_0}$ for all $n \geq n_0$.
   \item If $A$ is local and $X_n$ has finite length  as an $A$-module for all $n$ then there exists $P_X(z) \in \mathbb{Q}[z]$ such that $P_X(n) = \ell_A(X_n)$ for all $n \gg 0$.
  \item If $F$ is a coherent functor on the category of finitely generated $A$-modules then $F(X) = \bigoplus_{n \in \Z} F(X_n)\in \A(\R)$.
\item For an ideal $J$ in $A$, there exists $c_J$ such that \\ $\grade(J, X_n) = \grade(J, X_{c_J})$ for all $n \geq c_J$.
\end{enumerate}
\end{theorem}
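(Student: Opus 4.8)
The plan is to dispose of (1) and (4) immediately, to deduce (5) formally from (2) and (4) as the abstract promises, and to prove (2) and (3) by induction along the filtration $\A(\R)(0)\subseteq\A(\R)(1)\subseteq\A(\R)(2)=\A(\R)$. Part (4) is literally Theorem~\ref{coh-ar}. For (1) I would argue by this induction: if $X\in\gmod(\R)$ the statement is classical; if $X=N/M$ with $N\in\gmod(\Sc)$ and $M\in\gmod(\R)$, then $X_n=N_n/M_n$ is a quotient of the finitely generated $A$-module $N_n$ and $N_n=0$ for $n\ll 0$; and for $0\rt Y\rt X\rt D\rt 0$ with $Y\in\A(\R)(1)$, $D\in\gmod(\R)$, apply the previous cases degreewise to $0\rt Y_n\rt X_n\rt D_n\rt 0$. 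In particular $\A(\R)\subseteq\Lmod_f(\R)$, so the quasi-finite tools of Section~4, and especially \ref{ass-stable}, are at our disposal.

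\textbf{Part (2).} By \ref{ass-stable} it will be enough to show, for every $X\in\A(\R)$, that (i) $\Ass_A X$ is finite, and (ii) $\Ass_A H^0_{\R_+}(X)_n$ is independent of $n$ for $n\gg 0$. I would prove (i) by the same three-step induction: a finitely generated module has finitely many associated primes; for $X=N/M\in\A(\R)(1)$ it is exactly Theorem~\ref{amao}(1); and for $0\rt Y\rt X\rt D\rt 0$ one has $\Ass_A X\subseteq\Ass_A Y\cup\Ass_A D$. For (ii): if $X\in\gmod(\R)$ then $H^0_{\R_+}(X)$ is finitely generated and $\R_+$-torsion, so its components vanish for $n\gg 0$; and if $0\rt Y\rt X\rt D\rt 0$ with $Y\in\A(\R)(1)$, $D\in\gmod(\R)$, then $H^0_{\R_+}(Y)\rt H^0_{\R_+}(X)\rt H^0_{\R_+}(D)$ together with $H^0_{\R_+}(D)_n=0$ for $n\gg 0$ gives $H^0_{\R_+}(X)_n\cong H^0_{\R_+}(Y)_n$ for $n\gg 0$, reducing (ii) to the case $X\in\A(\R)(1)$. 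This last case is the crux. Here $H^0_{\R_+}(N/M)=\wt M/M$, where $\wt M=\{v\in N:\R_+^\ell v\subseteq M\ \text{for some}\ \ell\}$, and this torsion module is in general neither finitely generated over $\R$ nor quasi-finite (e.g.\ with $\R=A[x]\subseteq A[x,y]=\Sc$, $N=\Sc/x\Sc$, and $M$ the cyclic $\R$-submodule of $N$ generated by the class of $1$). I would feed $N/M$ into the construction of \ref{const}: the component $N_n/M_n$ carries a finite filtration with associated graded $L^*_{n+1}$, whence $\Ass_A(N_n/M_n)\subseteq\Ass_A L^*_{n+1}$, and the latter is independent of $n$ for $n\gg 0$ because $L^*$ is a finitely generated graded module over the standard graded $A$-algebra $G$, so \cite{ME} applies. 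The steps of that filtration also assemble, for each fixed number of steps, into honest finitely generated graded modules over $\R$ and over $\Sc$ whose components again have stable associated primes; one then traps $\Ass_A(N_n/M_n)$ between the resulting stable lower and upper bounds, and uses \ref{amao}(1) together with the monotonicity of \ref{ass-qf} for the quasi-finite quotient $(N/M)/H^0_{\R_+}(N/M)$ to force these bounds to agree. Pinching the lower and upper bounds together is the step I expect to be the main obstacle.

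\textbf{Part (3).} Assuming $A$ local and $\ell_A(X_n)<\infty$ for all $n$, I would induct on the filtration once more. If $X\in\gmod(\R)$ — a special case of $\A(\R)(1)$ (take $\Sc=\R$, $M=0$) — or more generally $X=N/M\in\A(\R)(1)$, this is Theorem~\ref{amao}(2). If $0\rt Y\rt X\rt D\rt 0$ with $Y\in\A(\R)(1)$, $D\in\gmod(\R)$, then finiteness of $\ell_A(X_n)$ forces $\ell_A(Y_n),\ell_A(D_n)<\infty$ and $\ell_A(X_n)=\ell_A(Y_n)+\ell_A(D_n)$; since a sum of two functions of polynomial type is again of polynomial type, this finishes the induction.

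\textbf{Part (5).} For a finitely generated $A$-module $L$ and an ideal $J$, $\grade(J,L)=\min\{i\geq 0:\Ext^i_A(A/J,L)\neq 0\}$, with value $+\infty$ precisely when $JL=L$. Each functor $F_i=\Ext^i_A(A/J,-)$ on $\smod(A)$ is coherent, so (4) gives $F_i(X)=\bigoplus_n\Ext^i_A(A/J,X_n)\in\A(\R)$, and (2) makes $\Ass_A F_i(X)_n$ stable for $n\gg 0$; in particular the truth value of ``$\Ext^i_A(A/J,X_n)=0$'' is eventually constant in $n$, for each fixed $i$. Since $\grade(J,X_n)\leq\dim_A X_n$ whenever it is finite and $\{\dim_A X_n\}_n$ is bounded (being eventually constant, again by (2)), only finitely many $i$ are relevant, and so $\grade(J,X_n)=\min\{i:F_i(X)_n\neq 0\}$ is independent of $n$ for $n\gg 0$, as required.
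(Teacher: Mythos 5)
Your treatment of (1), (3), (4) and (5) essentially coincides with the paper's: (4) is Theorem \ref{coh-ar}; (3) is the same three-step induction, with the $\A(\R)(0)$ and $\A(\R)(1)$ cases handled by Theorem \ref{amao}(2) and the $\A(\R)(2)$ case by additivity of length; and (5) is the same formal deduction from (2) and (4) via the coherent functors $\Ext^i_A(A/J,-)$ (the paper bounds the relevant $i$ by the number of generators of $J$, which is cleaner than your bound via $\dim_A X_n$, but your variant works). Your reduction of (2) via Theorem \ref{ass-stable} — prove $\Ass_A X$ finite and $\Ass_A H^0_{\R_+}(X)_n$ eventually stable — is also the paper's, as is the reduction of the second condition to the case $X = N/M \in \A(\R)(1)$.

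The genuine gap is exactly at the point you flag. For $X = N/M$ you propose to re-run the construction of \ref{const} and pinch $\Ass_A(N_n/M_n)$ between stable bounds, and you admit you cannot close the pinching step; it is not clear it can be closed that way, since the filtration of $N_{j-1}/M_{j-1}$ has $j$ steps, so truncating at a fixed number of steps does not obviously control the whole set. The paper's resolution is short and you already have every ingredient for it: apply $H^{\bullet}_{\R_+}(-)$ to $0 \rt M \rt N \rt N/M \rt 0$. Since $M \in \gmod(\R)$, both $H^0_{\R_+}(M)_n$ and $H^1_{\R_+}(M)_n$ vanish for $n \gg 0$, whence $H^0_{\R_+}(N/M)_n \cong H^0_{\R_+}(N)_n$ for $n \gg 0$. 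Now $H^0_{\R_+}(N) = H^0_{\R_+\Sc}(N)$ is an $\Sc$-submodule of $N$, hence a finitely generated graded $\Sc$-module, and the associated primes of the graded components of such a module stabilize (finiteness of $\Ass_A$ by \ref{ex-m}, eventual monotonicity by multiplication by the degree-one generators of $\Sc$, as in \ref{ass-qf}). In your own example $\R = A[x] \subseteq A[x,y] = \Sc$, $N = \Sc/x\Sc$, the torsion module $H^0_{\R_+}(N/M) = N/M$ is indeed not quasi-finite over $\R$, but it agrees in all large degrees with the finitely generated $\Sc$-module $N$, which is all that is needed. With this substitution your argument for (2), and hence the whole proof, goes through.
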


We will need the following exercise problem from \cite[6.7]{Ma}.
\begin{proposition}
  \label{ex-m} Let $R\rt S$ be a ring homorphism of Noetherian rings and let
  $M$ be a finitely generated $S$-module. Then $\Ass_R M = (\Ass_S M ) \cap R$. In particular $\Ass_A M$ is finite.
\end{proposition}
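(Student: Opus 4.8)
The plan is to prove the set equality $\Ass_R M=\{\,\mathfrak{q}\cap R:\mathfrak{q}\in\Ass_S M\,\}$, where $\mathfrak{q}\cap R$ denotes the contraction of $\mathfrak{q}$ along $R\to S$, by establishing the two inclusions separately; the inclusion ``$\supseteq$'' is a one-line computation, while ``$\subseteq$'' is the substantive half and will be obtained by localizing at the prime in question. Throughout I will freely use the following standard facts over Noetherian rings: that $\Ass$ of a finitely generated module is finite, that $\Ass$ commutes with localization (for a multiplicatively closed $U\subseteq S$ one has $\Ass_{U^{-1}S}(U^{-1}M)=\{\,U^{-1}\mathfrak{q}:\mathfrak{q}\in\Ass_S M,\ \mathfrak{q}\cap U=\emptyset\,\}$), that the formation of annihilators commutes with localization, and that a nonzero finitely generated module over a Noetherian ring has an associated prime.

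For ``$\supseteq$'', I would take $\mathfrak{q}=\ann_S(x)$ with $x\in M$ and observe that $\ann_R(x)=\ann_S(x)\cap R=\mathfrak{q}\cap R$; the induced injection $R/(\mathfrak{q}\cap R)\hookrightarrow S/\mathfrak{q}$ realizes $R/(\mathfrak{q}\cap R)$ as a subring of a domain, so $\mathfrak{q}\cap R$ is prime and therefore $\mathfrak{q}\cap R=\ann_R(x)\in\Ass_R M$.

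For ``$\subseteq$'', let $\mathfrak{p}=\ann_R(x)\in\Ass_R M$ and set $T=R\setminus\mathfrak{p}$, regarded also as a multiplicatively closed subset of $S$. Then $M'=T^{-1}M$ is a finitely generated module over the Noetherian ring $S'=T^{-1}S$, and the image of $x$ in $M'$ satisfies $\ann_{R_\mathfrak{p}}(x)=(\ann_R(x))_\mathfrak{p}=\mathfrak{p}R_\mathfrak{p}$; in particular $S'x\cong S'/\ann_{S'}(x)$ is nonzero, so it admits an associated prime $\mathfrak{q}'\in\Ass_{S'}(S'x)\subseteq\Ass_{S'}(M')$. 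Since $\mathfrak{q}'\supseteq\ann_{S'}(x)$, its contraction to the local ring $R_\mathfrak{p}$ is a proper ideal containing $\ann_{S'}(x)\cap R_\mathfrak{p}=\mathfrak{p}R_\mathfrak{p}$, hence equals $\mathfrak{p}R_\mathfrak{p}$. Taking $\mathfrak{q}$ to be the contraction of $\mathfrak{q}'$ along $S\to S'$, the localization description of $\Ass$ gives $\mathfrak{q}\in\Ass_S M$, and chasing the contractions along $R\to R_\mathfrak{p}$ and $S\to S'$ yields $\mathfrak{q}\cap R=\mathfrak{p}$.

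Finally, since $S$ is Noetherian and $M$ is finitely generated over $S$, the set $\Ass_S M$ is finite, so the displayed equality forces $\Ass_R M$ to be finite as well; the closing assertion is the special case $R=A$. I expect ``$\subseteq$'' to be the main obstacle: a naively chosen associated prime of $M$ over $S$ only contracts to \emph{some} prime of $R$ containing $\mathfrak{p}$, and the point of localizing at $\mathfrak{p}$ is precisely that $\mathfrak{p}R_\mathfrak{p}$ is maximal, so ``contains'' can be upgraded to ``equals''. The remaining issues are bookkeeping: that $T$ is a legitimate multiplicative set in $S$, that annihilators localize as claimed, and that the two successive contractions compose correctly.
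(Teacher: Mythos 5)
Your proof is correct. The paper does not actually prove this proposition --- it is quoted as Exercise 6.7 of Matsumura's \emph{Commutative ring theory} and used as a black box --- and your argument (contraction of $\ann_S(x)$ for ``$\supseteq$''; localization at $T=R\setminus\mathfrak{p}$ for ``$\subseteq$'', so that maximality of $\mathfrak{p}R_\mathfrak{p}$ upgrades ``contains $\mathfrak{p}$'' to ``equals $\mathfrak{p}$'') is the standard solution, with all the key checks (that $x/1\neq 0$ in $T^{-1}M$, that annihilators of elements localize, that the two contractions compose) handled properly.
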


We now give:
\begin{proof}[Proof of Theorem \ref{main}]
(1) This follows from the construction of the category $\A(\R)$.

(2) We prove step by step  when $X \in \A(\R)(i)$. When $i = 0$ we have that $X$ is a finitely generated graded $\R$-module. From Proposition \ref{ex-m} it follows that $\Ass_A X$ is a finite set. As $H^0_{\R_+}(X)_n = 0$ for $n \gg 0$ it follows from \ref{ass-qf} that $\Ass_A X_n \subseteq \Ass_A X_{n+1}$ for $n \gg 0$. The result follows.

When $i = 1$ then there exists a standard graded ring $\Sc$, an inclusion of graded rings $\R \subseteq \Sc$ with $\R_0 = \Sc_0 = A$ and a finitely generated graded $\R$-module $M$ and a finitely generated graded $\Sc$-module $N$ with an inclusion of  graded $\R$-modules $ M \subseteq  N$ with $X = N/M$. By Theorem \ref{amao}, $\Ass_A X$ is a finite set. By the exact sequence
$0 \rt M \rt N \rt X \rt 0$ it follows that $H^0_{\R_+} (X)_n =  H^0_{\R_+}(\Sc)_n$ for $n \gg 0$. We note that
$ E = H^0_{\R_+}(\Sc) =  H^0_{\R_+\Sc}(\Sc)$ is an $\Sc$-ideal. So $\Ass_A E$ is a finite set by \ref{ex-m}. Let $\Sc_1 = (s_1, \ldots, s_l)$. Consider the map
\begin{align*}
  \phi \colon  &\Sc \rt  \Sc(+1)^l\\
  s&\mapsto (ss_1,\ldots, ss_l).
\end{align*}
We note that $\ker \phi = (0 \colon_\Sc \Sc_+)$. So $(\ker \phi)_n = 0$ for $n \gg 0$. Thus $\phi$ induces inclusion $\Sc_{n} \rt \Sc_{n+1}^l$ for all $n \gg 0$. Note that $\phi$ induces an inclusion $E_n \rt E_{n + 1}^{l}$ for all $n \gg 0$. As this map is $A$-linear we get  $\Ass_A E_n \subseteq \Ass_A E_{n+1}$ for all $n \gg 0$. As $\Ass_A E$ is finite we get that $\Ass_A E_n$ is stable for $n \gg 0$. It follows that $\Ass_A H^0_{\R_+}(X)_n$ is stable for $n \gg 0$. As $\Ass_A X$ is a finite set it follows from Theorem \ref{ass-stable} that $\Ass_A X_n$ is stable for $n \gg 0$.

When $i = 2$ we have an exact sequence $0 \rt Y \rt X \rt D \rt 0$ where $Y \in \A(\R)(1)$ and $D$ is a finitely generated graded $\R$-module. We have
$\Ass_A X \subseteq \Ass_A Y \cup \Ass_A D$ is a finite set. We also have $H^0_{\R_+}(X)_n = H^0_{\R_+}(Y)_n$ for $n \gg 0$. By earlier assertion we have that $\Ass_A H^0_{\R_+}(Y)_n$ is stable for $n \gg 0$. So $\Ass_A H^0_{\R_+}(X)_n $ is stable for $n \gg 0$. The result follows from Theorem \ref{ass-stable}.

(3) We prove step by step  when $X \in \A(\R)(i)$. When $i = 0$ we have that $X$ is a finitely generated graded $\R$-module. As $\ell_A(X_n)$ is finite for all $n$, it follows that $\R/\ann_\R X$ is a standard graded ring over an Artin local ring. The result follows from \cite[4.1.3]{BH}.

When $i = 1$ then there exists a standard graded ring $\Sc$, an inclusion of graded rings $\R \subseteq \Sc$ with $\R_0 = \Sc_0 = A$ and a finitely generated graded $\R$-module $M$ and a finitely generated graded $\Sc$-module $N$ with an inclusion of  graded $\R$-modules $ M \subseteq  N$ with $X = N/M$.  As $\ell(N_n/M_n)$ is finite for all $n$ it follows from Theorem \ref{amao}, that the function
$n \rt \ell(N_n/M_n)$ is of polynomial type.

When $i = 2$ we have an exact sequence $0 \rt Y \rt X \rt D  \rt 0$ where $Y \in \A(\R)(1)$ and $D$ is a finitely generated graded $\R$-module. As $\ell(X_n)$ is finite for all $n$ it follows that $\ell(Y_n)$ and $\ell(D_n)$ is finite. As argued earlier the functions $n \rt \ell(Y_n)$ and $n \rt \ell(D_n)$ is of polynomial type.
It follows that $ n \rt \ell(X_n) =\ell(Y_n) + \ell(D_n)$ is of polynomial type.

(4) This result follows from Theorem \ref{coh-ar}.

(5) Recall if $E$ is a finitely generated $A$-module then $\grade(J, E) = \infty$ if and only if $E = JE$; equivalently $E \otimes A/J = 0$. The functor $F(-) = (-)\otimes A/J$ is coherent.
So $Y = X \otimes_A A/J \in  \A(\R)$. So by (2) we have $\Ass_A Y_n$ is stable. In particular either $\Ass Y_n = \emptyset $ or $\Ass_A Y_n \neq \emptyset $ for $n \gg 0$. Thus either $\grade(J, X_n)$ is finite for all $n \gg 0$ OR $\grade(J, X_n) = \infty $ for all $n \gg 0$.

Now consider the case when $\grade(J, X_n) < \infty $ for all $n \gg 0$. If $J = (a_1, \ldots, a_m)$ then note that for any finitely generated $A$-module $E$ with $E \neq JM$ we have $\grade(J, E) \leq m$.  The functors $G_i(-) = \Ext^i_A(A/J, -)$ for $i = 0, \ldots, m$ are coherent. Set $Y^i = G_i(X) \in \A(\R)$ for $i = 0, \ldots, m$. By (2) we have
 we have $\Ass_A Y^i_n$ is stable. In particular either $\Ass Y^i_n = \emptyset $ or $\Ass_A Y^i_n \neq \emptyset $ for $n \gg 0$.
 It follows that there exists $r \leq m-1$ with

 (i)  $\Ass Y^i_n = \emptyset $ for all $n \gg 0$ and for all $i < r$.

 (ii) $\Ass  Y^r_n \neq \emptyset$ for all $n \gg 0$.

 Thus $\grade(J, X_n) = r$ for all $n \gg 0$.
\end{proof}

Next we give an affirmative answer to questions raised in \ref{question}.
\begin{corollary}
  Let $A$ be a Noetherian ring and let $\R = \bigoplus_{n \geq 0}\R_n$ be a standard graded ring with $\R_0 = A$.  Assume there exists a standard graded ring $\Sc$, an inclusion of graded rings $\R \subseteq \Sc$ with $\R_0 = \Sc_0 = A$ and a finitely generated graded $\R$-module $M$ and a finitely generated graded $\Sc$-module $N$ with an inclusion of  graded $\R$-modules $ M \subseteq  N$.
  \begin{enumerate}[\rm(1)]
\item
$\Ass_A F(N_n/M_n)$ is stable for $n \gg 0$ ?
\item
if $J$ is an ideal in $A$ then  $\grade(J, F(N_n/M_n))$ is constant for $n \gg 0$.
\item
if $A$ is local and  $F(N_n/M_n)$ has finite length for all $n$ then
the function  $n \rt \ell(F(N_n/M_n))$  of polynomial type.
\end{enumerate}
\end{corollary}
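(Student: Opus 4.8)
The plan is to reduce all three parts to Theorem~\ref{main} by the single observation that the quotient $N/M$ already lies in $\A(\R)$. First I would set $X = N/M$ and check it against the iterative definition in \ref{cat}: the data supplied by the hypotheses --- a standard graded $A$-algebra $\Sc$ with $\R \subseteq \Sc$ and $\R_0 = \Sc_0 = A$, a module $M \in \gmod(\R)$, a module $N \in \gmod(\Sc)$, an inclusion $M \subseteq N$ of graded $\R$-modules, and $X = N/M$ --- is verbatim the defining data of $\A(\R)(1)$. Hence $X \in \A(\R)(1) \subseteq \A(\R)$.

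Since $F$ is a coherent functor on $\smod(A)$, I would then invoke Theorem~\ref{coh-ar} (equivalently Theorem~\ref{main}(4)) to conclude that $F(X) = \bigoplus_{n \in \Z} F(X_n) \in \A(\R)$. Writing $Y = F(X)$, we have $Y_n = F(X_n) = F(N_n/M_n)$ for every $n$, so the remaining task is simply to read off the consequences of Theorem~\ref{main} for $Y$.

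Applying Theorem~\ref{main} to $Y \in \A(\R)$: part (2) yields an $n_0$ with $\Ass_A F(N_n/M_n) = \Ass_A Y_n = \Ass_A Y_{n_0}$ for all $n \geq n_0$, giving (1); part (5), applied to the ideal $J$, yields a constant $c_J$ with $\grade(J, F(N_n/M_n)) = \grade(J, Y_n) = \grade(J, Y_{c_J})$ for all $n \geq c_J$, giving (2); and when $A$ is local and each $Y_n = F(N_n/M_n)$ has finite length, part (3) yields $P_Y(z) \in \mathbb{Q}[z]$ with $P_Y(n) = \ell_A(Y_n) = \ell_A(F(N_n/M_n))$ for $n \gg 0$, giving (3).

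I do not expect a genuine obstacle here, since all the real work --- stability of associated primes, polynomial growth of lengths, and stabilization of grade --- has already been done in the general setting of $\A(\R)$. The one step that should be stated with care is the identification of $N/M$ as an element of $\A(\R)(1)$; but the hypotheses of the corollary were set up precisely to make this a direct match with the definition, so this too is immediate.
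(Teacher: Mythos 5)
Your proof is correct and is exactly the paper's argument: the paper's own proof is the one-line observation that $N/M \in \A(\R)$ so Theorem \ref{main-body} applies, and your expansion (placing $N/M$ in $\A(\R)(1)$, passing to $F(N/M) \in \A(\R)$ via part (4), then reading off parts (2), (3), (5)) is just the same reduction spelled out.
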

\begin{proof}
  The result follows from Theorem \ref{main-body} as $N/M \in \A(\R)$.
\end{proof}

\end{document}